\newtheorem{theorem}{Theorem}[section]
\newtheorem{lemma}[theorem]{Lemma}
\newtheorem{proposition}[theorem]{Proposition}
\newtheorem{corollary}[theorem]{Corollary}
\newtheorem{question}[theorem]{Question}
\theoremstyle{definition}
\newtheorem{definition}[theorem]{Definition}
\newtheorem{example}[theorem]{Example}
\theoremstyle{remark}
\newtheorem{remark}[theorem]{Remark}
\numberwithin{equation}{section}
\begin{document}

\title{Existence of symplectic surfaces}

\author{Tian-Jun Li}
\address{School  of Mathematics\\  University of Minnesota\\ Minneapolis, MN 55455}
\email{tjli@math.umn.edu}
\thanks{The author is supported in part by NSF grant 0435099 and the McKnight fellowship.}

\subjclass{Primary 57R57, 58D15}
\date{September 20, 2004}


\begin{abstract}
In this paper we show that every degree 2 homology class of a 2n-dimensional symplectic manifold 
is represented by an immersed symplectic surface if it has positive symplectic area. Moreover,
the symplectic surface can be chosen to be embedded if  2n is at least 6. We also analyze the
additional conditions under which embedded symplectic representatives exist in dimension 4.
\end{abstract}

\maketitle

\section{Introduction}

In this paper we will address the question which 2 dimensional homology class of a symplectic manifold $(M,\omega)$
can be  represented by an embedded  symplectic submanifold.
To study this problem, we first explore Gromov's beautiful h-principles on symplectic embeddings and immersions. 
   We are able to  give a complete answer in dimensions 6 and above 
 and derive some interesting consequences. 
 Clearly, a necessary condition for a class
$A\in H_2(M;{\Bbb Z})$ to be represented by an embedded (or an immersed) symplectic 
surface is that $\omega(A)>0$. We call such a class a $\omega-$positive class.
Our observation is  that every $\omega-$positive class is in fact  effective.

\begin  {theorem} Suppose $(M,\omega)$ is a symplectic manifold
of dimension $2n$. Let $A$ be any $\omega-$positive class in $H_2(M;{\Bbb
Z})$. Then  
\begin{enumerate}

\item $A$ is represented by a connected embedded
$\omega-$symplectic surface if $2n\geq 6$. 

\item $A$ is represented by a connected immersed $\omega-$symplectic surface if $2n\geq 4$.

\end{enumerate}
\end{theorem}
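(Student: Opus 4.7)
I would apply Gromov's h-principle for symplectic immersions to reduce the problem to finding a formal symplectic immersion representing $A$, and then, in the case $2n\ge 6$, upgrade the immersion to an embedding by general position. Since $\Omega_2^{SO}=0$, the class $A$ is represented by a smooth map $f_0:\Sigma\to M$ from a closed oriented surface $\Sigma$; by tubing components together in $M$ and attaching 1-handles inside small balls of $M$, I may assume $\Sigma$ is connected of arbitrarily large genus $g$ while preserving the homology class $A$.

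\textbf{Building the formal immersion.} I need a bundle monomorphism $F:T\Sigma\to f_0^*TM$ with $F^*\omega$ a positive area form on $\Sigma$. Fix an $\omega$-compatible almost complex structure $J$; then the positively oriented symplectic Grassmannian of 2-planes in $({\Bbb R}^{2n},\omega)$ deformation retracts onto the space of complex lines, namely ${\Bbb C}P^{n-1}$, which is simply connected for $n\ge 2$. Over the 2-complex $\Sigma$, the pullback bundle $f_0^*TM$ therefore admits complex line subbundles $V$, and since $[\Sigma,{\Bbb C}P^{n-1}]\cong H^2(\Sigma;{\Bbb Z})$ the Chern class $c_1(V)$ can be prescribed to be any integer. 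Choosing $g$ large enough and picking $V$ with $c_1(V)=2-2g=\chi(\Sigma)=c_1(T\Sigma)$, the classification of oriented rank-$2$ bundles over surfaces by Euler class gives an orientation-preserving isomorphism $T\Sigma\cong V\hookrightarrow f_0^*TM$, producing the desired $F$ (with $F^*\omega>0$ by the complex-orientation convention).

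\textbf{Conclusion and main obstacle.} The $\omega$-symplectic condition on immersions is open and diffeomorphism-invariant, so Gromov's h-principle gives a genuine $\omega$-symplectic immersion $g:\Sigma\to M$ homotopic to $f_0$; in particular $g_*[\Sigma]=A$, which proves (2). For (1), assume $2n\ge 6$; since $2\dim\Sigma=4<2n$, a generic $C^\infty$-small perturbation of $g$ is an embedding, and since being symplectic is a $C^1$-open condition on immersions, a sufficiently small perturbation remains symplectic. The principal difficulty is the bundle-theoretic construction of $F$, specifically verifying that complex line subbundles of $f_0^*TM$ of arbitrary first Chern class exist (so that $c_1(V)$ can be matched with $\chi(\Sigma)$ after adjusting the genus); once that is arranged, the rest reduces to Gromov's h-principle and a standard general-position argument.
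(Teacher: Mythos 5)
Your overall architecture (smooth connected representative $\to$ formal complex-linear monomorphism $\to$ h-principle $\to$ general-position perturbation to an embedding when $2n\geq 6$) parallels the paper's, and your last step for (1) — perturbing a symplectic immersion to an embedding and using that the symplectic condition is $C^1$-open — is a legitimate shortcut past the paper's use of the isosymplectic \emph{embedding} theorem, since Theorem 1.1 only asks for a symplectic image. But the central step, your invocation of the h-principle, has a genuine gap. There is no h-principle of the form ``the symplectic condition on immersions is open and Diff-invariant, hence formal solutions integrate'' for a \emph{closed} source: the open/Diff-invariant h-principle is a statement about open manifolds, and the directed-immersion relation cut out by the (open) set of symplectic $2$-planes is not ample — for a fixed line ${\Bbb R}v$ the set of $w$ with $\mathrm{span}(v,w)$ symplectic is the complement of the hyperplane $\ker\omega(v,\cdot)$, whose components have convex hulls that are proper half-spaces — so convex integration does not apply either. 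The theorem you actually need is Gromov's \emph{iso}symplectic immersion theorem (Theorem 2.7 of the paper), which carries the cohomological hypothesis $f_0^*[\omega_M]=[\omega_\Sigma]$ for the area form $\omega_\Sigma=F^*\omega_M$, i.e. $\int_\Sigma F^*\omega_M=\omega(A)$.

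This is exactly where the hypothesis $\omega(A)>0$ must enter, and your write-up never uses it: your formal data (a complex line subbundle $V\subset f_0^*TM$ with $c_1(V)=\chi(\Sigma)$) exists for \emph{every} degree $2$ class $A$, so if your version of the h-principle were valid it would produce symplectic surfaces in classes with $\omega(A)\leq 0$, which is impossible. The repair is the paper's Lemma 2.10: rescale $F\mapsto tF$ (still a complex monomorphism, with $(tF)^*\omega_M=t^2F^*\omega_M>0$) to arrange $\int_\Sigma F^*\omega_M=\omega(A)$ — possible precisely because both sides are positive — and then apply Theorem 2.7. Two smaller points: the classification $[\Sigma,{\Bbb CP}^{n-1}]\cong H^2(\Sigma;{\Bbb Z})$ governs complex line subbundles of a \emph{trivial} bundle, whereas $f_0^*TM$ need not be trivial; you should instead take sections of its projectivization (same conclusion over a surface), or use the paper's transversality argument (Lemma 2.4), which produces a complex monomorphism $T\Sigma\to f_0^*TM$ directly for any genus and makes your ``$g$ large'' hypothesis unnecessary. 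Finally, the ``principal difficulty'' you identify is misplaced: the bundle construction is the soft part, and the real content is the area normalization and the cohomological matching just described.
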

Here, an embedding $\Sigma\subset M$ is called symplectic if $\omega$ restricts
to a symplectic form on $\Sigma$. Similarly, an immersion $i:\sigma\longrightarrow M$
is called symplectic if $i^*\omega$ is a symplectic form on $\Sigma$. This result should
also be known to some experts, e.g. see the recent preprint \cite{Le}.

We also analyze in dimension 4 when the immersed symplectic surface can be made embedded.
 In view of the normal connected sum operation in
~\cite{Go} and ~\cite{MW}, it is a very important question.  
 We will analyze some obstructions and also describe several
 constructions. 

\medskip

The author wishes to thank L. Ein, C. Leung, B. H. Li and Y. Ruan for very helpful discussions. The author also
wishes to thank the referee for the careful readings and pointing out the reference \cite{Le}. 

\section {Symplectic surfaces via h-principles}

\subsection {Smoothly embedded surfaces, monomorphisms and h-principles}

In this section $\Sigma$ is a closed oriented surface.
We first collect some basic and well-known facts of representing a homology class of degree
2 by $\Sigma$.

\begin {lemma} 
(see Hopf ~\cite{Ho}) Let $M$ be a connected oriented smooth
manifold of dimension $2n$ and $A$ a homology class of degree 2.
Then 
 $A$ is represented by a continuous map  from an oriented surface $\Sigma$
to $M$.  
\end{lemma}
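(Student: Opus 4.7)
My plan is to build a closed oriented surface representing $A$ directly from a singular $2$-cycle. Since $H_2(M;{\Bbb Z})$ is singular homology, $A$ is represented by an integral singular $2$-cycle, and after duplicating simplices to absorb integer coefficients I may write
$$z=\sum_{i=1}^N\epsilon_i\,\sigma_i,\qquad \epsilon_i\in\{\pm1\},\ \sigma_i\colon\Delta^2\to M.$$
The cycle condition $\partial z=0$ translates into a pairing of the oriented boundary edges of the $\sigma_i$, along which the contributions cancel.

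Having fixed such a pairing, I would form a $2$-complex $K$ by taking $N$ disjoint copies of the standard triangle (with orientation $\epsilon_i$ on the $i$-th copy) and gluing each matched pair of edges by the orientation-reversing affine identification. The maps $\sigma_i$ descend to a continuous $f\colon K\to M$, and by construction $f_*[K]=A$. It then suffices to replace $K$ by a closed oriented surface carrying the same homology class.

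By construction every edge of $K$ is incident to exactly two triangles, so $K$ is already a $2$-manifold away from its vertices. At a vertex $v$ the link $L_v$ is a graph whose vertices are the edges of $K$ at $v$ and whose edges are the triangles of $K$ at $v$; since each edge of $K$ lies in exactly two triangles, every vertex of $L_v$ has valence two, so $L_v$ is a disjoint union of circles, and a neighbourhood of $v$ in $K$ is a wedge of discs indexed by the components of $L_v$. Separating each $v$ into one copy per component of $L_v$ produces a closed $2$-manifold $\Sigma$ and a quotient $\pi\colon\Sigma\to K$ with $\pi_*[\Sigma]=[K]$. The orientations $\epsilon_i$ on the triangles are compatible across every identified pair of edges (again by $\partial z=0$), so $\Sigma$ inherits an orientation, and $f\circ\pi\colon\Sigma\to M$ is the desired representative of $A$.

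The only step that is not immediate bookkeeping is the link computation showing $K$'s singularities are wedges of discs. Everything else — orienting $\Sigma$ and checking that the homology class is unchanged by the local resolution — is routine once the local picture is understood.
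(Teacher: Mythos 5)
Your construction is correct, and it is essentially the classical direct proof of this fact; the paper itself offers no argument at all for Lemma 2.1, citing Hopf, so there is nothing in the text to conflict with. You have also identified the one genuinely non-trivial point: after gluing the triangles of the cycle $z=\sum_i\epsilon_i\sigma_i$ in cancelling pairs of faces, the resulting complex $K$ is a manifold along open edges but may be singular at vertices, and the link computation (every vertex of $L_v$ has valence two because every edge of $K$ lies in exactly two triangles, hence $L_v$ is a disjoint union of circles) is exactly what justifies the local resolution into a closed surface. Two small points worth making explicit if you write this up: first, the pairing exists because for each singular $1$-simplex $\tau$ the signed count of its occurrences among the faces $\epsilon_i(-1)^j\partial_j\sigma_i$ is zero, so positive and negative occurrences can be matched bijectively; second, matching a positive with a negative occurrence means the two induced boundary orientations are opposite under the canonical affine identification of the faces, which is precisely why the $\epsilon_i$ descend to an orientation of $\Sigma$. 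For comparison, the route usually attributed to Thom and implicit in the Hopf citation is less hands-on: one shows the natural map $\Omega_2^{SO}(M)\to H_2(M;{\Bbb Z})$ is surjective (indeed an isomorphism, since $\Omega_1^{SO}=\Omega_2^{SO}=0$ kills all differentials into the relevant spot of the Atiyah--Hirzebruch spectral sequence), or one factors through a $K(\pi,1)$ as in Hopf's original exact sequence. Your argument buys elementarity and an explicit surface; the bordism argument buys brevity and generalizes to higher-degree questions where the direct gluing fails.
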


\begin{lemma} 
Let $M, A$ be as in Lemma 2.1 and suppose
that $A$ is
 represented by a continuous map $f$  from an oriented surface $\Sigma$
to $M$. Then
\begin{enumerate}
  
\item  When $2n\geq 6$,  $f:\Sigma\longrightarrow M$ 
is homotopic to  an embedding of $\Sigma$ into $M$.

\item 
When $2n=4$,  $f:\Sigma\longrightarrow M$ 
 is homotopic to  an immersion $g:\Sigma\longrightarrow M$; and the   
double points of the immersed surface $g(\Sigma)$ 
 can be eliminated to get an embedded surface 
of higher genus still representing $A$. 
\end{enumerate}
\end{lemma}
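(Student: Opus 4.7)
The plan is to reduce both parts to standard transversality (general position) arguments applied to a smoothing of $f$, and then handle dimension 4 separately by a local surgery at double points.

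First I would invoke standard smoothing theory to replace the continuous map $f:\Sigma\to M$ by a smooth map in the same homotopy class; since $\Sigma$ is a closed 2-manifold this is immediate from the relative smoothing theorem (or from the Whitney approximation theorem together with the usual collar trick). Call the smoothed map $f$ again. The homology class $A = f_*[\Sigma]$ is of course unchanged.

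For part (1), with $2n \geq 6$, I would apply multi-jet transversality to the map $f:\Sigma\to M$. The locus where two distinct points of $\Sigma$ are mapped to the same point of $M$ is cut out, after perturbation, to have codimension $2n$ in $\Sigma\times\Sigma\setminus\Delta$, which is $4$-dimensional. Since $2n\geq 6 > 4$, a generic small $C^\infty$-perturbation of $f$ is injective. The analogous codimension count for the singular set of the differential $df$ is $2(2n-2)\geq 8 > 2$, so the perturbation is also an immersion, hence an embedding. Since the perturbation is small, it is smoothly homotopic to $f$.

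For part (2), with $2n=4$, the same transversality computation shows that the double-point locus in $\Sigma\times\Sigma\setminus\Delta$ is now of codimension $4=\dim(\Sigma\times\Sigma\setminus\Delta)$, so generically there are finitely many transverse double points and no higher-order self-intersections, while $df$ remains injective (codimension $2(2n-2)=4>2$ gives a generic immersion). Thus $f$ is homotopic to an immersion $g:\Sigma\to M$ with only transverse double points. To get an embedded representative of $A$, I would resolve each double point by the standard tube-surgery: near a double point $p$, the image consists of two smooth disks $D_1,D_2$ intersecting transversely at $p$; remove a small disk neighborhood from each sheet of $g^{-1}(D_1\cup D_2)$ in $\Sigma$ and glue in an annulus embedded in $M$ that joins the two resulting boundary circles along a tube running close to $D_1\cup D_2$. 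This replaces $\Sigma$ by a surface $\Sigma'$ of genus one higher, still representing $A$ because the surgery is supported in a ball and the tube can be oriented so that the resulting cycle is homologous to the old one. Iterating over the finite set of double points produces an embedded surface representing $A$.

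The main obstacle is the tube-surgery step in dimension 4: one must check that the local resolution can be carried out in a small ball around each double point producing an \emph{embedded} piece, and that the orientations of the two sheets can be matched by an annulus so that the resulting closed surface still represents $A$ rather than some perturbation of it. This is a purely local computation in a Darboux-type ball and is standard, but it is the only genuinely non-routine ingredient; the transversality inputs are direct applications of Thom's jet transversality theorem.
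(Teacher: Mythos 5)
Your argument is the standard transversality/general-position proof, which is exactly what the paper invokes (it dismisses the lemma in one sentence as "standard transversality theory": perturb to an immersion when $2n\ge 4$ and to an embedding when $2n\ge 5$, with the usual local resolution of double points in dimension $4$). The proposal is correct and follows essentially the same route, just spelled out in more detail.
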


Lemma 2.2 follows from the standard transversality theory: the
continuous map $f$ can be first perturbed to an immersion when $2n\geq 2\cdot 2$;
furthermore the immersion can be approximated by
an embedding when $2n\geq 2\cdot 2+1$.

When $2n\geq 4$, any disconnected embedded 
(or immersed) surface  
 can be tubed to get a connected embedded (or immersed) surface
representing the same class. Thus, following from  Lemmas 2.1
and 2.2,  we have

\begin{proposition}
 (see Thom ~\cite{Th}) Let $M$ be a connected oriented smooth
manifold of dimension $2n\geq 4$ and $A$ a homology class of degree 2.
Then 
\begin{enumerate}

\item  $A$ is represented by a connected embedded surface. 

\item If $A$ is a spherical class, then $A$ is represented by
an embedded sphere if $2n\geq 6$ and an immersed sphere if $2n=4$. 
 
\end{enumerate}
\end{proposition}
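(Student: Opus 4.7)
The proof is essentially an assembly of the lemmas already established: Lemma 2.1 gives a continuous representative, Lemma 2.2 upgrades it to an embedding or immersion, and the tubing remark just above the proposition connects components. I would present part (1) and part (2) separately, emphasizing that in part (2) we must refrain from both tubing and from the genus-raising double-point elimination, since a spherical representative must be a sphere.

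For part (1), I start with the class $A \in H_2(M;\mathbb Z)$ and invoke Lemma 2.1 to obtain a continuous map $f : \Sigma \to M$ from some closed oriented (possibly disconnected) surface $\Sigma$ realizing $A$. If $2n \geq 6$, Lemma 2.2(1) homotopes $f$ to an embedding; if $2n=4$, Lemma 2.2(2) produces an immersion and then eliminates the double points at the cost of raising the genus, yielding an embedded surface. In either case the resulting embedded surface $\Sigma' \subset M$ represents $A$ but may still be disconnected. I then apply the tubing remark preceding the proposition: in a manifold of dimension $\geq 4$ one may join two components of $\Sigma'$ by an embedded tube (a neighborhood of an arc transverse to $\Sigma'$, thickened via the normal bundle), producing an embedded surface of one component fewer in the same homology class. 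Iterating until one component remains gives a connected embedded surface representing $A$.

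For part (2), sphericity of $A$ means by hypothesis that $A$ is represented by a continuous map $f : S^2 \to M$. When $2n \geq 6$, Lemma 2.2(1) applies directly with $\Sigma = S^2$, homotoping $f$ to an embedding whose image is an embedded 2-sphere representing $A$. When $2n = 4$, the first half of Lemma 2.2(2) homotopes $f$ to an immersion $g : S^2 \to M$, which is the desired immersed sphere. Here it is essential to stop at the immersion stage: one must not invoke the double-point elimination or the tubing procedure, both of which would raise the genus and destroy sphericity of the representative.

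There is no real obstacle; the only subtlety worth flagging is the asymmetry in part (2) between dimensions $\geq 6$ and dimension $4$. In dimension $4$ the generic double points of $g$ are honest transverse self-intersections and cannot in general be removed without adding handles, which is exactly why part (2) only asserts an immersed sphere there, whereas for part (1) we are free to raise the genus and obtain an embedding. Both statements are dimensional in nature and follow formally from transversality as encoded in Lemma 2.2 together with the elementary tubing construction.
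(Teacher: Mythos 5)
Your proposal is correct and follows exactly the paper's own route: Lemma 2.1 for a continuous representative, Lemma 2.2 for the embedding (when $2n\geq 6$) or the immersion with double-point elimination (when $2n=4$), and the tubing remark to connect components, with the spherical case handled by applying Lemma 2.2 directly to a map from $S^2$ and stopping before any genus-raising step. The paper compresses all of this into the single sentence preceding the proposition, so your write-up is simply a fuller version of the same argument.
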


Here, a class is said to be spherical if it is represented by    
a continuous map from $S^2$ to $M$. In other words, it is in the image
of the Hurewicz homomorphism $\pi_2(M)\longrightarrow H_2(M;{\Bbb
Z})$.

Now we study the 
the space of real and complex monomorphisms between
bundles over $\Sigma$.

\begin{lemma}
 Let $\Sigma$ be a closed oriented surface and $E$ and $F$ 
be complex vector bundles over $\Sigma$ with  real
dimensions $2$ and  $2n\geq 4$ respectively. Then the space of 
complex monomorphisms Mono$_{\Bbb C}(E,F)$ is non-empty and connected. 
And  the space of 
real monomorphisms Mono$_{\Bbb R}(E,F)$ is non-empty and connected if $2n\geq 6$.

\end{lemma}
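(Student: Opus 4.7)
The plan is to view both $\mathrm{Mono}_{\Bbb C}(E,F)$ and $\mathrm{Mono}_{\Bbb R}(E,F)$ as spaces of continuous sections of associated fiber bundles over $\Sigma$, and then apply elementary obstruction theory on the $2$-dimensional base. The fiber of the complex monomorphism bundle over $p\in\Sigma$ is $\mathrm{Mono}_{\Bbb C}(\Bbb C, \Bbb C^n) = \Bbb C^n \setminus \{0\}$, which is homotopy equivalent to $S^{2n-1}$. The fiber of the real monomorphism bundle is the Stiefel manifold $V_2(\Bbb R^{2n})$ of ordered $2$-frames in $\Bbb R^{2n}$, which is well known to be $(2n-3)$-connected.

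Since $\dim\Sigma = 2$, standard obstruction theory shows that the existence of a section is controlled by primary obstructions in $H^k(\Sigma; \pi_{k-1}(\mathrm{fiber}))$ for $k = 1, 2$, while path-connectedness of the space of sections (realized as extending a section of the pulled-back bundle from $\Sigma \times \partial I$ to $\Sigma \times I$) is controlled by obstructions in $H^k(\Sigma; \pi_k(\mathrm{fiber}))$ for $k = 0, 1, 2$. In both situations the job reduces to verifying that the fiber has vanishing $\pi_0$, $\pi_1$, and $\pi_2$.

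In the complex case, $2n \geq 4$ forces $n \geq 2$, so $S^{2n-1}$ is at least $2$-connected and all three homotopy groups vanish, giving both conclusions. In the real case, $2n \geq 6$ makes $V_2(\Bbb R^{2n})$ at least $3$-connected, so the same vanishing holds. One can note that when $2n=4$ the sphere bundle $V_2(\Bbb R^4) \to S^3$ with fiber $S^2$ produces $\pi_2(V_2(\Bbb R^4)) \cong \Bbb Z$, giving a genuine $H^2(\Sigma;\Bbb Z)$ obstruction to connectedness and explaining the dimension restriction. The only real work is identifying the fibers and reading off their connectivity; once those are in hand, the matching of fiber homotopy groups against the cohomological dimension of $\Sigma$ is routine.
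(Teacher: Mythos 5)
Your argument is correct, but it takes a different route from the paper's. The paper works inside the total space of the vector bundle $\mathrm{HOM}(E,F)$: the singular (non-injective) homomorphisms form a stratified subbundle of real codimension $2n$ in the complex case and $2n-1$ in the real case, and transversality then guarantees that a generic section (respectively, a generic path of sections over $\Sigma\times[0,1]$) misses this locus as soon as the codimension exceeds $2$ (respectively $3$). You instead pass to the open fiber subbundle of fiberwise injective maps, identify its fibers as $\Bbb C^n\setminus\{0\}\simeq S^{2n-1}$ and (up to deformation retraction) the Stiefel manifold $V_2(\Bbb R^{2n})$, and run obstruction theory over the $2$-dimensional base; the connectivity of the fibers kills every obstruction group in the stated ranges. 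Both arguments are standard and both hinge on the same numerology. The transversality route is shorter and needs no identification of the fiber or its homotopy groups; your route is more informative, since it pinpoints the unique surviving obstruction when $2n=4$, namely the class in $H^2(\Sigma;\pi_2(V_2(\Bbb R^4)))\cong H^2(\Sigma;\Bbb Z)$, which is exactly the Euler number of the quotient bundle that the paper invokes separately in Remark 2.5 to index the components of $\mathrm{Mono}_{\Bbb R}(E,F)$. In effect your proof of the lemma also proves that remark. One small point of care: obstruction theory with twisted coefficients is not an issue here only because the relevant fibers are simply connected in the stated ranges, so it is worth saying that the coefficient systems are untwisted.
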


\begin{proof}  Let us first consider the bundle HOM$_{\Bbb C}(E, F)$ of complex homomorphisms. The stratified subbundle of singular 
homomorphisms has complex codimension $n$. By transversality, if
$2n$ is bigger than the real dimension of the base which is $2$,  then a generic complex homomorphism
is a complex monomorphism. And if $2n>2+1$, then every generic complex homomorphism over
$\Sigma\times [0,1]$ is a complex monomorphism. Thus the claim about Mono$_{\Bbb C}(E,F)$ is proved. 

Let us now consider the bundle HOM$_{\Bbb R}(E, F)$ of real homomorphisms. The stratified subbundle of singular 
homomorphisms now has real codimension $[2-(2-1)][2n-(2-1)]=2n-2+1=2n-1$. By  transversality, if $2<2n-1$, i.e. $2n\geq 4$
then a generic homomorphism is a monomorphism. Furthermore, if $2+1<2n-1$, then
a generic path of homomorphisms consists of monomorphisms only. 
The proof is now complete.
\end{proof}

\begin{remark}
When  $2n=4$,  a simple obstruction theory argument
can be used to show that 
  the  connected components of the real monomorphisms 
Mono$_{\Bbb R}(E,F)$ 
are indexed by the Euler number of the quotient bundle, which can be any integer
congruent to $c_1(F)-c_1(E)$ modulo 2.  For a complex monomorphism, the Euler number
of the quotient bundle is of course equal to  $c_1(F)-c_1(E)$. 
For more on complex and real monomorphisms see ~\cite{K}.
\end{remark}

 Recall  the notion of the normal Euler number of an immersion
of an oriented surface in an oriented 4-manifold. 
Given an immersion $f:\Sigma\longrightarrow M$, the differential $Df$
is a monomorphism from $T\Sigma$ to the pull-back bundle
$f^*TM$. 
 The normal bundle $N(f)$ of $f$ is then
the quotient bundle of $f^*TM$ by $Df(T\Sigma)$, and the Euler number of $N(f)$
is called the normal Euler number of $f$. 
Suppose $A$ is represented by a continuous map from $\Sigma$. 
We can apply Remark 2.5 and Hirsch's fundamental result on immersions in ~\cite{Hi}
(which can be considered as the h-principle for smooth immersions)
to realize $A$ by an immersion from $\Sigma$ to $M$ with any normal Euler number
as long as it is congruent to $A\cdot A$ modulo 2.

\medskip
 Now we state the relevant h-principles on symplectic embeddings and immersions
(see ~\cite{EM}). We first need to  introduce several definitions. Let $(M,\omega_M)$ and $(V,\omega_V)$
be symplectic
manifolds of dimensions $2n$ and $2q$ respectively. Let $f:V\longrightarrow M$
be a continuous map and $F:TV\longrightarrow TM$ be a real homomorphism which covers $f$.  The map $f$ is called  an isosymplectic immersion
if $f^*\omega_M=\omega_V$, and it is called a isosymplectic embedding if
it is also a smooth embedding. Notice that a symplectic immersion is
automatically a smooth immersion. 
The homomorphism $F$ is called symplectic if
$F^*\omega_M$ is non-degenerate and 
$f^*[\omega_M]=[\omega_V]$, and $F$ 
 is called isosymplectic if it is symplectic and
$F^*\omega_M=\omega_V$. Notice that symplectic and isosymplectic homomorphisms
are necessarilly monomorphisms. 

We begin with the embedded version.

\begin{theorem} [Gromov's isosymplectic embedding theorem]
 Let $(M,\omega_M)$ and $(V,\omega_V)$
be symplectic
manifolds of dimensions $2n$ and $2q$ respectively. Suppose that an
embedding
$f_0:V\longrightarrow M$ satisfies the cohomological condition
$f_0^*[\omega_M]=[\omega_V]$, and that the differential $F_0=Df_0$ is
homotopic
via a homotopy of monomorphisms
$F_t:TV\longrightarrow TM$ covering $f_0$
to an isosymplectic homomorphism $F_1:TV\longrightarrow TM$.  If $V$ is
closed
and $2n\geq  2q+4$, then there exists an isotopy $f_t:V\longrightarrow M$
such that the embedding $f_1:V\longrightarrow M$ is isosymplectic
and the differential $Df_1$ is homotopic to $F_1$ through
isosymplectic homomorphisms (with varying base maps). Moreover, one can choose the isotopy
$f_t$ to be arbitrarily $C^0-$close to $f_0$.  

\end{theorem}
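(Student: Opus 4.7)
The plan is to treat this as an h-principle statement and prove it by Gromov's convex integration, organized in two stages: first promote the formal data $(f_0,F_t)$ to an honest symplectic embedding, and then upgrade the pullback form to equal $\omega_V$ on the nose.

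In the first stage, symplecticity of a monomorphism is an open, $\mathrm{Diff}$-invariant condition, so Hirsch-style smooth h-principle arguments (applied fiberwise and combined with general position, which is very comfortable in codimension $2n-2q\geq 4$) produce an embedding $g:V\to M$, isotopic to $f_0$ and $C^0$-close to it, with $Dg$ homotopic to $F_1$ through \emph{symplectic} (not yet isosymplectic) monomorphisms covering the varying base map. The form $g^*\omega_M$ is then symplectic on $V$, and cohomology invariance under isotopy together with the hypothesis $f_0^*[\omega_M]=[\omega_V]$ gives $[g^*\omega_M]=[\omega_V]$; write $g^*\omega_M-\omega_V=d\alpha$.

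In the second stage, run convex integration to kill $d\alpha$. One perturbs $g$ iteratively by small, highly oscillatory corrugations in the normal directions of $g(V)$, supported in thin tubes around arcs in $V$; the averaged first-order effect of each corrugation is arranged to cancel a prescribed portion of $d\alpha$, while its $C^0$ contribution is summable. The codimension $2n-2q\geq 4$ is what creates enough normal room for these corrugations, keeps the perturbed maps embeddings, and preserves the homotopy class of $Dg$ as an isosymplectic homomorphism. Once $g_k^*\omega_M$ is close enough to $\omega_V$ in $C^0$, the interpolating family of cohomologous symplectic forms is non-degenerate, so Moser's trick supplies a diffeomorphism $\phi$ of $V$ with $(g_k\circ\phi)^*\omega_M=\omega_V$; set $f_1=g_k\circ\phi$ and concatenate the isotopies of the two stages with the Moser isotopy.

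The main obstacle, as always with convex integration, is the quantitative bookkeeping of stage two: choosing amplitudes and frequencies of successive corrugations so that (i) the embeddings $g_k$ converge in $C^0$ to an embedding (not merely an immersion), (ii) the pullback forms $g_k^*\omega_M$ converge to $\omega_V$, and (iii) each $Dg_k$ stays in the prescribed homotopy class of isosymplectic monomorphisms, so that the final $Df_1$ is homotopic to $F_1$ as required. Stage one and the closing Moser argument are comparatively formal; the technical subtlety is squeezed into the convex-integration loop of stage two.
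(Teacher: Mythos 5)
First, a point of calibration: the paper does not prove this statement at all --- it is quoted as Gromov's theorem with a pointer to \cite{EM}, so there is no internal proof to compare against, and your attempt has to be measured against the standard argument (Eliashberg--Mishachev, Ch.~12). Your two-stage skeleton --- first make the embedding $\omega_M$-symplectic with the pullback cohomologous to $\omega_V$, then correct the exact discrepancy $d\alpha$ until Moser's trick closes the gap --- is indeed the right outline. But both stages are carried by assertion, and the mechanisms you supply are not the ones that work.

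Stage one is not ``comparatively formal.'' Making an embedding of a \emph{closed} manifold symplectic is the h-principle for directed embeddings into the open relation ``tangent plane is $\omega_M$-symplectic''; for embeddings (as opposed to immersions) of closed manifolds this is a substantial theorem, and it is here --- not in ``normal room for corrugations'' --- that the hypothesis $2n\geq 2q+4$ is genuinely consumed. A perturbation realized as a section of the normal disk bundle is automatically an embedding in \emph{any} positive codimension, so the reason you give for needing codimension $4$ in stage two is empty; meanwhile the theorem with $2n=2q+2$ is actually false with the same formal hypotheses (Section~3 of this paper exhibits $\omega$-positive classes with $g_{\omega}(A)\geq 0$ and $A\cdot A\geq 0$, hence with formal solutions, that admit no connected embedded symplectic representative because of stable classes). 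Stage two transplants the Nash--Kuiper template without justification: you never exhibit the mechanism by which a normal oscillation changes $g^*\omega_M$ by a prescribed exact $2$-form, never verify the ampleness that convex integration needs, and --- most concretely --- an infinite corrugation loop with amplitudes $\sim 1/N$ and frequencies $\sim N$ produces at best a $C^1$ limit, which is fatal here since the conclusion requires a smooth embedding to which Moser and the homotopy statement for $Df_1$ can be applied. The standard argument avoids all of this with a \emph{finite, exact} construction: by the symplectic neighborhood theorem one splits a trivial symplectic $\mathbb{R}^2$-summand off the normal bundle locally, and the graph of a pair of functions $(\phi,\psi)$ in that summand changes the pullback by exactly $d(\phi\,d\psi)$; writing $d\alpha$ as a finite sum of such terms realizes the correction with no iteration and no convergence issues. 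As written, your proposal defers precisely the load-bearing steps.
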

Next is the  immersed version.

\begin{theorem} [Gromov's isosymplectic immersion theorem]
 Let $(M,\omega_M)$ and $(V,\omega_V)$
be symplectic
manifolds of dimensions $2n$ and $2q$ respectively. Suppose a continuous
map
$f_0:V\longrightarrow M$ satisfies the cohomological condition
$f_0^*[\omega_M]=[\omega_V]$, and that $f_0$ is covered by
an isosymplectic homomorphism $F:TV\longrightarrow TM$.
  If $V$ is closed
and $2n\geq  2q+2$, then there exists a homotopy $f_t:V\longrightarrow M$
such that  $f_1:V\longrightarrow M$ is an isosymplectic immersion,
i.e. $f_1^*\omega_M=\omega_V$, 
and the differential $Df_1$ is homotopic to $F$ through
isosymplectic homomorphisms. 

\end{theorem}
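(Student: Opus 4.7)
The plan is to establish this h-principle in two stages: first apply Smale-Hirsch immersion theory to realize the formal datum $(f_0,F)$ by a genuine immersion whose pullback of $\omega_M$ is merely cohomologous to $\omega_V$, and then upgrade cohomological equality to pointwise equality via either Moser's trick (in low codimension) or Gromov's convex integration (in general).

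For the first stage, the hypothesis $2n\geq 2q+2$ is precisely the positive-codimension assumption of the Smale-Hirsch h-principle, so the monomorphism $F$ covering $f_0$ integrates to an immersion $g:V\longrightarrow M$ homotopic to $f_0$, with $Dg$ homotopic to $F$ through bundle monomorphisms and $C^0$-close to $F$.  Since $F^*\omega_M=\omega_V$ is non-degenerate, openness of non-degeneracy gives that $g^*\omega_M$ is itself a symplectic form on $V$, and homotopy invariance of pullback together with the cohomological hypothesis forces $[g^*\omega_M]=[\omega_V]$ in $H^{2}(V)$.  For the second stage, I would apply Gromov's convex integration to the first-order differential relation $\mathcal{R}\subset J^{1}(V,M)$ consisting of those $1$-jets $j^{1}f$ with $f^{*}\omega_M=\omega_V$; its ampleness, together with the parametric form of convex integration, would then produce the isosymplectic immersion $f_1$ and the claimed homotopy of isosymplectic homomorphisms $Df_1\simeq F$.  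In the case $q=1$ of interest to the rest of this paper one can bypass convex integration entirely: on the oriented surface $V$ the linear interpolation $\omega_t=(1-t)\omega_V+t\,g^{*}\omega_M$ is a path of cohomologous symplectic forms (any convex combination of orientation-compatible area forms is still positive), so Moser's stability theorem delivers a diffeomorphism $\phi$ of $V$ with $\phi^{*}g^{*}\omega_M=\omega_V$, and $g\circ\phi$ is the desired isosymplectic immersion.

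The hard part will be the ampleness verification for $\mathcal{R}$ in the general case: one must check pointwise that the fiber of $\mathcal{R}$ over a fixed $0$-jet $(p,f(p))$ has convex hull equal to the full affine space of linear maps $T_pV\to T_{f(p)}M$ covering that base point.  This is a linear-algebra computation on the principal symbol of the pullback operator $f\mapsto f^{*}\omega_M$, and the gap $2n-2q\geq 2$ is exactly what makes the convex hull fill the ambient space.  Once ampleness is secured, the remainder of the proof—the $C^{0}$-approximation aspect and the parametric statement that $Df_1$ can be connected to $F$ through isosymplectic homomorphisms—follows from the standard convex integration package of Eliashberg-Mishachev and requires no new ideas beyond careful bookkeeping.
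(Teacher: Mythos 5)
The paper does not actually prove this statement: Theorem~2.7 is Gromov's theorem, quoted as a black box from Eliashberg--Mishachev \cite{EM}, so there is no internal proof to compare against. Judged on its own terms, your outline has the right overall shape (formal datum $\to$ genuine immersion $\to$ correction of the pullback form), but two load-bearing steps are wrong as stated.

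First, Smale--Hirsch does not give an immersion $g$ with $Dg$ $C^0$-close to $F$. The immersion h-principle produces $g$ that is $C^0$-close to $f_0$ \emph{as a map}, with $Dg$ merely \emph{homotopic} to $F$ through monomorphisms; the differential itself oscillates and need not be near $F$ anywhere. So ``openness of non-degeneracy'' does not hand you $g^*\omega_M$ symplectic: a single homotopy class of monomorphisms can contain both symplectic and totally degenerate immersions (e.g.\ a Lagrangian torus in ${\Bbb R}^4$ has the same normal Euler number, hence the same formal class by Remark~2.5, as a symplectic one). To get a symplectic immersion in the prescribed formal class you need the separate h-principle for the \emph{open} relation $\{g^*\omega_M \hbox{ nondegenerate}\}$, which is itself a convex-integration theorem and is where the codimension hypothesis genuinely enters. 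Second, your general-dimension Stage~2 applies convex integration to $\mathcal{R}=\{f^*\omega_M=\omega_V\}$, but this relation is \emph{closed}, not open, so the Eliashberg--Mishachev convex integration package does not apply to it; moreover the ampleness you propose to verify fails outright. Ampleness is a condition on principal subspaces (fix all partial derivatives but one), and the intersection of $\mathcal{R}$ with such a subspace is cut out by equations \emph{linear} in the remaining partial derivative, hence is a proper affine subspace whose convex hull is itself, not the ambient space --- the gap $2n-2q\geq 2$ does not rescue this. The actual proof corrects the exact error $g^*\omega_M-\omega_V=d\alpha$ by a different mechanism. Your $q=1$ Moser bypass is sound \emph{once} a symplectic immersion $g$ in the right formal class with $\int_\Sigma g^*\omega_M=\int_\Sigma\omega_V$ is in hand (on a connected oriented surface a nondegenerate $2$-form with positive total integral is a positive area form, so the linear interpolation stays symplectic), and that is the only case this paper uses; but the step producing such a $g$ is precisely the one your argument elides.
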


 Notice that, unlike the embedded case, $f_0$ is not assumed to be an immersion.
And even when $f_0$ is an immersion,  $F$ is not necessarilly  
homotopic to $Df_0$ via a homotopy of real  monomorphisms.

In general,  it is not easy to verify
the conditions required in these two theorems. However, for embeddings and immersions of 
symplectic surfaces, we
can effectively use almost complex structures to achieve these conditions. 

\subsection {Existence and some consequences}

We first briefly review how almost complex structures come into play.
Given a non-degenerate $2-$form $\tau$ and an almost complex structure $J$ on $M$, we say 
$\tau$ and $J$ are tamed by each other if 
$\tau(v, Jv)>0$ for any non-zero tangent vector $v$.  
Notice that  a closed $2-$form on $M$ is a symplectic form if and only if
it is tamed by $J\in {\mathcal J}_{\omega}$. 
Let $\omega$ be a symplectic structure on $M$, and
${\mathcal J}_{\omega}$ be the space of the almost complex structures on $M$
tamed by ${\omega}$. 
Then for any $J\in {\mathcal J}_{\omega}$, an embedded or immersed $J-$holomorphic submanifold is $\omega-$symplectic. 
Conversely, any embedded symplectic submanifold is $J-$holomorphic for some $J\in {\mathcal J}_{\omega}$.

\begin{proposition} Suppose $(M,\omega)$ is a symplectic manifold
of dimension $2n$ with symplectic canonical class $K_{\omega}$. Let $A$ be a homology class in 
$H_2(M;{\Bbb Z})$
with $\omega(A)>0$, $\Sigma$ be a connected oriented surface of genus
$g$,
and $f_0:\Sigma\longrightarrow M$ an embedding representing $A$.  Then 
\begin{enumerate}

\item There exists an embedding  $f_1:\Sigma\longrightarrow M$
such that $f_1(\Sigma)$ is a symplectic surface representing $A$
  if $2n\geq 6$.

\item There exists an immersion  $f_1:\Sigma\longrightarrow M$
such that $f_1(\Sigma)$ is a symplectic surface representing $A$
if $2n= 4$. Moreover the normal Euler number of the immersion $f_1$  is given by the adjunction formula
$2g-2-K_{\omega}(A)$. 
\end{enumerate}
\end{proposition}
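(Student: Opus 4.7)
The plan is to apply Gromov's isosymplectic $h$-principles, Theorem 2.6 for part (1) and Theorem 2.7 for part (2), after preparing the relevant symplectic data on $\Sigma$ together with an appropriate isosymplectic bundle homomorphism covering $f_0$. First, I would fix a symplectic (area) form $\omega_\Sigma$ on $\Sigma$ with $\int_\Sigma \omega_\Sigma = \omega(A)$; this is possible since $\omega(A) > 0$, and it immediately supplies the cohomological hypothesis $f_0^*[\omega]=[\omega_\Sigma]$ in $H^2(\Sigma;{\Bbb R})$ that is needed in both theorems.

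The central step is to produce an isosymplectic homomorphism $F : T\Sigma \to TM$ covering $f_0$. I would choose $J \in {\mathcal J}_\omega$ and a compatible almost complex structure on $\Sigma$, so that $f_0^*TM$ becomes a complex bundle of rank $n$ over $\Sigma$. Since $2n \geq 4$, Lemma 2.4 supplies a complex monomorphism $F' : T\Sigma \to f_0^*TM$. Because $J$ tames $\omega$, the pullback $(F')^*\omega$ is a pointwise positive area form on $\Sigma$, so $(F')^*\omega = \lambda \omega_\Sigma$ for a unique smooth positive function $\lambda$ on $\Sigma$. The fibrewise rescaling $F := \lambda^{-1/2} F'$ is again a real monomorphism and now satisfies $F^*\omega = \omega_\Sigma$, making $F$ isosymplectic. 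This construction, together with the connectedness clause of Lemma 2.4 used below, is the main technical content; the rest reduces to verifying the hypotheses of the $h$-principles.

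For part (1) with $2n \geq 6$, Theorem 2.6 additionally demands that $Df_0$ be homotopic to $F$ through real monomorphisms covering $f_0$. This is exactly the range in which the second assertion of Lemma 2.4 guarantees that the space of real monomorphisms $T\Sigma \to f_0^*TM$ is connected, so such a homotopy exists. Gromov's embedding theorem then isotopes $f_0$ to an isosymplectic, hence $\omega$-symplectic, embedding $f_1 : \Sigma \to M$ representing $A$.

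For part (2) with $2n = 4$, Theorem 2.7 applies directly to $F$ and $f_0$ (the condition $2n \geq 2q + 2$ reduces to $2n \geq 4$, and no monomorphism-level homotopy is required), producing an isosymplectic immersion $f_1$ with $f_1^*\omega = \omega_\Sigma$. Since $Df_1$ is isosymplectic, each tangent plane $Df_1(T_x\Sigma)$ is a positively oriented symplectic $2$-plane in $T_{f_1(x)}M$, so its $\omega$-symplectic complement realizes the normal bundle $N(f_1)$ as a symplectic, equivalently complex, line bundle. Its Euler number then equals its first Chern class, and I would finish with
\[
e(N(f_1)) = c_1(f_1^*TM) - c_1(T\Sigma) = -K_\omega(A) - (2-2g) = 2g - 2 - K_\omega(A),
\]
which is the asserted adjunction value.
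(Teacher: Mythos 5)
Your argument is correct and follows essentially the same route as the paper: Lemma 2.4 produces a complex monomorphism covering $f_0$, tameness of $J$ makes its pullback a positive area form, a rescaling turns it into an isosymplectic homomorphism with the right total area, and Theorems 2.6 and 2.7 then finish parts (1) and (2) respectively. The only cosmetic differences are that the paper rescales by a global constant and \emph{defines} $\omega_{\Sigma}$ to be the resulting pullback (rather than fixing $\omega_{\Sigma}$ in advance and rescaling fibrewise by $\lambda^{-1/2}$), and it reads off the normal Euler number from the quotient bundle of the chosen complex monomorphism via Remark 2.5 rather than from the symplectic normal bundle of the final immersion $f_1$.
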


\begin{proof}  We first assume  $2n\geq 6$ so that the codimension 4 condition in Theorem 2.6 is satisfied.

 Let $j$ be a complex structure on $\Sigma$ and $J$ be an almost complex structure on $M$ 
tamed by $\omega_M$. 
  By Lemma 2.4, we can find
a complex monomorphism $F_{1\over 2}$ which is homotopic to the real monomorphism
$F_0=Df_0$ via a homotopy of
real
monomorphisms covering $f_0$.

To find an isosymplectic homomorphisms  required in Theorem 2.6, we need the following two lemmas.
  
\begin{lemma}
For any complex monomorphism
$F:(T\Sigma,j)\longrightarrow (TM, J)$ covering
$f_0:\Sigma\longrightarrow M$,  $F^*\omega_M$ 
is non-degenerate
on $T\Sigma$ and is a closed 2-form with $\int_{\Sigma}F^*\omega_M>0$. 
\end{lemma}

\begin{proof} $F^*\omega_M$ is closed on $\Sigma$ since it is
a 2-form on a 2-manifold. 
Since $J$ is tamed by $\omega_M$,  we have 
$\omega_M(v, Jv)>0$ for any $x$ in $M$ and any non-zero $v\in T_xM$. 
Since $F$ is a complex
monomorphism, we have $F(v)\ne 0$
in $T_{f_0(z)}M$, and 
$$F^*\omega_M(v,jv)=\omega_M(F(v), F(jv))=\omega_M(F(v), J(F(v)))>0,$$
 for any non-zero $v\in T_z\Sigma$.
\end{proof}

 \begin{lemma} Let $G_0$ be a complex monomorphism from
 $(T\Sigma,j)$
to $(TM, J)$, and $R$ be any positive real number. Then $G_0$
 is homotopic via a homotopy of complex monomorphisms to a complex
 monomorphism
$G_1$ such that 
$$\int_{\Sigma}G_1^*\omega_M=R.$$
\end{lemma}

\begin{proof}
 By Lemma 2.9, $r=\int_{\Sigma}G_0^*\omega_M>0$.
Since, for any positive real number $t$,    $tG$ is still a complex monomorphism with $(tG)^*\omega=t(G^*\omega)$, we find that
 $$G_t=[(1-t)+t{R\over r}] G_0$$
 is a required homotopy.
\end{proof}

Applying  Lemma 2.10 to $G_0=F_{1\over 2}$ and $R=\omega_M(A)$, we obtain a complex
monomorphism $F_1=G_1$ which is homotopic to $F_{1\over 2}$ via a homotopy of
(complex)
monomorphisms covering $f_0$ and satisfies
$$\int_{\Sigma}F_1^*\omega_M=\omega_M(A).$$ 
 Notice that the monomorphism $F_{1}$ is  homotopic to $F_0=Df_0$ via a homotopy of
real
monomorphisms covering $f_0$ since $F_{1\over 2}$ is assumed to be so.
Now if we let $\omega_{\Sigma}=F_1^*\omega_M$, then $[\omega_{\Sigma}]=f_0^*[\omega_M]$,
and moreover $\omega_{\Sigma}$ is a
symplectic
form on $\Sigma$ by Lemma 2.9.

We have shown that $F_t$ is the required homotopy, therefore we can apply 
Theorem 2.6 to $(\Sigma, \omega_{\Sigma}), (M, \omega_M),
f_0$ and  $F_t$ to  conclude
that $f_0$ is isotopic to an embedding $f_1$ such that 
$f_1^*
\omega_M=\omega_{\Sigma}$. In particular, $f_1(\Sigma)$ is
an embedded $\omega_M-$symplectic surface representing the class $A$. 

\medskip

Now we assume that $2n=4$. 
Since the  isosymplectic immersion theorem only requires
codimension 2, 
it can be applied to this case.

By Lemma 2.4, there exists a complex monomorphim 
$G:(T\Sigma,j)\longrightarrow (TM,J)$ covering $f_0$ whose quotient complex line bundle
has Euler number
$$\int_{\Sigma} f_0^*c_1(TM,J)-\int_{\Sigma}c_1(T\Sigma,j)=-K_{\omega}(A)-(2-2g).$$  
Applying Lemma 2.10 to $G_0=G$ and $R=\omega_M(A)$,  we obtain a complex
monomorphism $F=G_1$ which satisfies
$$\int_{\Sigma}F^*\omega_M=\omega_M(A).$$ 
Now let $\omega_{\Sigma}=F^*\omega_M$, then $[\omega_{\Sigma}]=f_0^*[\omega_M]$,
and $\omega_{\Sigma}$ is a
symplectic
form on $\Sigma$ by Lemma 2.9.   

We have shown that $F$ is the required isosymplectic homomorphism, and therefore we can apply 
Theorem 2.7 to $(\Sigma, \omega_{\Sigma}), (M, \omega_M),
f_0$ and  $F$ to  conclude
that $f_0$ is homotopic to an immersion $f_1$ such that 
$f_1^*
\omega_M=\omega_{\Sigma}$. In particular, $f_1(\Sigma)$ is
an immersed $\omega_M-$symplectic surface representing the class $A$.
The proof of Proposition 2.8 is now complete. 
\end{proof}

\begin{proof} of Theorem 1. It  immediately follows from Lemma 2.1, Proposition 2.8 and the first part of Proposition 2.3. 
\end{proof}

\begin{remark}
The same method can be used to
show that the symplectic surfaces in Theorem 1.1 are symplectically isotopic. 
\end{remark}

As we will see in \S3.2, there are other constructions of symplectic submanifolds.
For an integral symplectic manifold $(M,\omega)$,   Donaldson ~\cite{Do}  uses an approximately holomorphic technique to construct
 codimension 2
sympletic submanifolds representing the 
Poincar\'e dual to high multiples of
the $[\omega]$.  In fact, Donaldson is able to 
show that any closed symplectic manifold contains symplectic submanifolds of
any even codimension. 
There have been various generalizations (see e.g. ~\cite{A},~\cite{MPS},~\cite{P}). However, the constructed 
symplectic submanifolds  only represent 
homology classes close to the Poincar\'e duals to the powers
of $[\omega]$. 
For 2-dimensional symplectic submanifolds, the h-principle
applies to  any class of degree
2. And  we have better control of the
genera of the symplectic surfaces. In particular, 
for manifolds which are not symplectically aspherical, we can find 
symplectic spheres. Recall that a symplectic manifold $(M,\omega)$ is said to be
symplectically
aspherical if $[\omega]$ vanishes on the image of the Hurewicz
homomorphism
$\pi_2(M)\longrightarrow H_2(M;{\Bbb Z})$. 
The following result is a direct consequence of  Proposition 2.8 and the second part of 
Proposition 2.3. 

\begin{corollary} Suppose $(M,\omega)$ is a symplectic manifold
of
dimension $2n$. Let $A$ be a spherical homology class in $H_2(M;{\Bbb
Z})$ with $\omega(A)>0$. Then $A$ is represented by an immersed
$\omega-$symplectic
$2-$sphere; and if $2n\geq 6$,   $A$ is represented by an embedded
$\omega-$symplectic
$2-$sphere. In particular there are symplectic spheres in $(M,\omega)$
if and only if it is not symplectically aspherical.  
\end{corollary}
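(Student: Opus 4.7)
The plan is to deduce both existence statements by chaining the two results just assembled, and then to extract the ``in particular'' clause by a short logical argument on both directions of the equivalence.

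First I would apply Proposition 2.3(2) with $A$ spherical to produce a base representative $f_0$ of $A$ by a sphere: an embedding $f_0:S^2\hookrightarrow M$ when $2n\geq 6$, and an immersion $f_0:S^2\looparrowright M$ when $2n=4$. In the high-dimensional case this embedding $f_0$ is exactly the input required by Proposition 2.8(1), so feeding it in produces an embedded $\omega$-symplectic surface of genus $0$ representing $A$, as desired.

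In dimension $4$ we have only an immersion from $S^2$, not an embedding, so Proposition 2.8(2) does not literally apply as stated. However, the argument used in its proof goes through without change for $f_0$ merely continuous, because the only invocation in the $2n=4$ case is Theorem 2.7 (Gromov's isosymplectic immersion theorem), whose hypothesis is a continuous base map $f_0$ covered by an isosymplectic homomorphism. Thus I would verify (or rather record) that Lemmas 2.9 and 2.10 together with the monomorphism construction from Lemma 2.4 produce an isosymplectic homomorphism $F:TS^2\to TM$ covering $f_0$, and conclude via Theorem 2.7 that $f_0$ is homotopic to an isosymplectic immersion. This yields an immersed $\omega$-symplectic $2$-sphere representing $A$ in the $2n=4$ case.

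For the ``in particular'' statement, one direction is immediate: if $(M,\omega)$ admits an immersed or embedded $\omega$-symplectic $2$-sphere representing a class $B$, then $B$ lies in the image of the Hurewicz map and $\omega(B)>0$, so $[\omega]$ does not vanish on $\pi_2(M)$ and $(M,\omega)$ is not symplectically aspherical. Conversely, if $(M,\omega)$ is not symplectically aspherical, pick any spherical class $B$ with $\omega(B)\neq 0$; replacing $B$ by $-B$ if needed we may assume $\omega(B)>0$, and the first part of the corollary produces the sought symplectic sphere.

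The only subtle point I anticipate is the mismatch already flagged between the ``embedding'' hypothesis in Proposition 2.8 and the fact that in dimension $4$ a spherical class is only guaranteed to be represented by an \emph{immersion}. The main work is therefore to notice that Proposition 2.8(2) is robust enough to accept this weaker input, after which the corollary is a formal consequence.
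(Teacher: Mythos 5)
Your proof is correct and follows the paper's own route exactly: the paper derives this corollary as a ``direct consequence of Proposition 2.8 and the second part of Proposition 2.3.'' Your extra observation---that in dimension $4$ Proposition 2.8(2) nominally requires an embedding $f_0$ while a spherical class only yields an immersed (indeed merely continuous) map from $S^2$, and that the proof of 2.8(2) nevertheless goes through since Theorem 2.7 only needs a continuous base map covered by an isosymplectic homomorphism---is a legitimate point that the paper glosses over, and you resolve it correctly.
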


Specifically,  when $M$ is simply
connected, by the Hurewicz Theorem,
 every class of degree 2 is 
spherical, and hence represented by 
 an embedded symplectic sphere if $2n\geq 6$ and an immersed symplectic sphere if $2n=4$. 

Since an embedded symplectic submanifold is $J-$holomorphic for some 
$J\in{\mathcal J}_{\omega}$, we also have the  existence of
embedded
pseudo-holomorphic curves. 

\begin{corollary}
Let   $(M, \omega)$ be a symplectic manifold of
  dimension at least 6, and $A$ a  2 dimensional 
homology class  with $\omega(A)>0$. Then $A$  is 
represented by an embedded $J-$holomorphic curve for some $J\in{\mathcal J}_{\omega}$.
 And if $M$ is simply connected, 
$A$ is represented 
by an embedded 
$J-$holomorphic sphere for some 
$J\in{\mathcal J}_{\omega}$.
\end{corollary}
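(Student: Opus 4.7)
The plan is to reduce this immediately to Theorem~1.1 together with the general principle, recalled at the start of \S2.2, that any embedded $\omega$-symplectic submanifold is $J$-holomorphic for some $J \in {\mathcal J}_{\omega}$. Since the dimension hypothesis $2n \geq 6$ matches the hypothesis of part (1) of Theorem~1.1, the first assertion should follow essentially by stringing together two previously established facts.

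Concretely, the first step is to invoke Theorem~1.1(1), which produces a connected embedded $\omega$-symplectic surface $\Sigma \subset M$ representing $A$. The second step is to apply the tameness criterion: because $\Sigma$ is an embedded symplectic submanifold, there exists $J \in {\mathcal J}_{\omega}$ for which $T\Sigma$ is $J$-invariant, and for such a $J$ the inclusion $\Sigma \hookrightarrow M$ is a $J$-holomorphic embedding. This finishes the general case.

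For the simply connected case, I would first apply the Hurewicz theorem: since $\pi_1(M) = 0$, the Hurewicz homomorphism $\pi_2(M) \to H_2(M;{\mathbb Z})$ is surjective, so every degree 2 homology class of $M$ is spherical. In particular $A$ is spherical, and $\omega(A) > 0$ by assumption. Applying Corollary~2.12 (in the $2n \geq 6$ case) then produces an embedded $\omega$-symplectic $2$-sphere representing $A$. Another application of the tameness criterion upgrades this embedded symplectic sphere to a $J$-holomorphic one for an appropriate $J \in {\mathcal J}_{\omega}$.

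There is no substantial obstacle here: all the analytic content has been packaged into Proposition~2.8, Theorem~1.1 and Corollary~2.12, and the only remaining ingredient is the standard fact that the space of almost complex structures tamed by $\omega$ and preserving a given embedded symplectic submanifold is non-empty (obtained by choosing a compatible $J$ on the symplectic normal bundle and extending). I would expect the proof to occupy only a few lines.
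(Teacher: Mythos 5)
Your proposal is correct and follows exactly the route the paper intends: Theorem~1.1(1) (respectively, Hurewicz plus Corollary~2.12 in the simply connected case) produces the embedded symplectic surface (respectively, sphere), and the observation from the start of \S2.2 that any embedded symplectic submanifold is $J$-holomorphic for some $J\in{\mathcal J}_{\omega}$ finishes the argument. No discrepancy with the paper's (essentially one-line) justification.
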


 However, 
in dimension 4, the immersed symplectic surfaces obtained
in Proposition 2.8 cannot always
be chosen to be pseudo-holomorphic.
In fact, an immersed symplectic surface is  $J-$holomorphic for  some $J$
if and only if it is positively immersed, i.e. all the double points are positive
double points. On the other hand,  we will see from Lemma 3.16 that, for
a fixed class $A$, 
the existence of a simple pseudo-holomorphic curve (not necessarily embedded)
is equivalent to the existence of an embedded symplectic surface.

We end section 2.2 by showing that Theorem 1.1 implies a duality
between surface cones and symplectic cones over ${\Bbb Q}$. We first need to  introduce several
definitions. Let $V$ be a vector space over ${\Bbb Q}$ or ${\Bbb R}$
and $V^*$ be the dual space of $V$. 
If $U$ is a subset of $V$, 
the dual of $U$ is the subset $U^*$ in $V^*$ given by
$$U^*=\{\alpha\in V^*|\alpha(v)>0 \hbox{ for any $v\in U$ }\}.$$
In the following, $V$ would either be $H^2(M;{\Bbb R})$ or $H^2(M;{\Bbb Q})$. 

Define the $\omega-$effective set ${\mathcal A}_{\omega}$ by
$${\mathcal A}_{\omega}=\{A\in H_2(M;{\Bbb Z})|A \hbox{ is represented by an
embedded $\omega-$symplectic surface}\}.$$
In terms of this definition, Theorem 1.1 is then simply the following
statement. 

\begin{corollary} If $2n\geq 6$, then we have
 ${\mathcal A}_{\omega}=[\omega]^* \cap H_2(M;{\Bbb Z})$, where
$[\omega]^*$ is the real dual of $[\omega]$. 
\end{corollary}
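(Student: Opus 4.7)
The statement is essentially a reformulation of Theorem 1.1(1) in cone language, so the plan is to verify that each inclusion amounts to a direct unpacking of definitions plus a single invocation of an already-proved result.

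First I would prove the inclusion $\mathcal{A}_\omega \subseteq [\omega]^* \cap H_2(M;\mathbb{Z})$. Suppose $A \in \mathcal{A}_\omega$, so $A$ is represented by an embedded $\omega$-symplectic surface $\Sigma \subset M$. By definition of a symplectic embedding, $\omega|_\Sigma$ is a symplectic (hence area) form on the oriented surface $\Sigma$, so the cohomological pairing satisfies
\[
[\omega](A) = \int_\Sigma \omega|_\Sigma > 0.
\]
Thus $A$ lies in $[\omega]^*$ (viewed inside $H_2(M;\mathbb{R})$ under the pairing with $H^2(M;\mathbb{R})$), and since $A$ is integral it lies in $[\omega]^* \cap H_2(M;\mathbb{Z})$.

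For the reverse inclusion $[\omega]^* \cap H_2(M;\mathbb{Z}) \subseteq \mathcal{A}_\omega$, let $A \in H_2(M;\mathbb{Z})$ with $[\omega](A) > 0$. This is precisely the condition that $A$ is $\omega$-positive in the sense defined in the Introduction. Since we are assuming $2n \geq 6$, Theorem 1.1(1) applies and yields a connected embedded $\omega$-symplectic surface representing $A$. Therefore $A \in \mathcal{A}_\omega$.

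There is essentially no obstacle here: the content of the corollary is entirely carried by Theorem 1.1(1), whose proof was completed earlier via Gromov's isosymplectic embedding theorem together with Lemmas 2.4, 2.9 and 2.10. The only thing one has to be slightly careful about is the identification between the abstract dual $[\omega]^* \subset V^*$ (with $V = H^2(M;\mathbb{R})$ or $H^2(M;\mathbb{Q})$) and the subset $\{A : [\omega](A)>0\}$ of $H_2(M;\mathbb{R})$, but this is immediate from the canonical pairing $H_2(M;\mathbb{R}) \cong H^2(M;\mathbb{R})^*$ and the fact that intersecting with $H_2(M;\mathbb{Z})$ recovers precisely the $\omega$-positive integral classes.
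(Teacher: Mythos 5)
Your proof is correct and matches the paper's intent exactly: the paper offers no separate argument, simply noting that this corollary is Theorem 1.1 restated in the language of the dual cone, and your two inclusions are precisely the unpacking of that restatement (positivity of area for an embedded symplectic surface in one direction, Theorem 1.1(1) in the other).
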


If we define the rational  $\omega-$surface cone 
${\mathcal S}^{\Bbb Q}_{\omega}$ (real $\omega-$surface cone 
${\mathcal S}^{\Bbb R}_{\omega}$) as the convex cone in
$H_2(M;{\Bbb Q})$ ($H_2(M;{\Bbb R})$) generated by ${\mathcal A}_{\omega}$,
then we have the following weaker version of Theorem 1.1.

\begin{corollary}  The $\omega-$surface cones are given by 
$${\mathcal S}^{\Bbb Q}_{\omega}=[\omega]^*\cap H_2(M;{\Bbb Q}),\quad \quad
{\mathcal S}^{\Bbb R}_{\omega}=[\omega]^*.
$$ 
\end{corollary}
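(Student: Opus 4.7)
The plan is to deduce Corollary 2.15 directly from Corollary 2.14 by an elementary convex-geometric density argument, so the work is entirely of a packaging nature (I work under the dimension hypothesis $2n\ge 6$ inherited from Corollary 2.14). The easy inclusions ${\mathcal S}^{\Bbb Q}_{\omega}\subseteq[\omega]^*\cap H_2(M;{\Bbb Q})$ and ${\mathcal S}^{\Bbb R}_{\omega}\subseteq[\omega]^*$ hold for free, since $\omega$ is strictly positive on each generator and therefore on any nontrivial non-negative combination.

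For the reverse inclusion of the rational statement, I would take $v\in[\omega]^*\cap H_2(M;{\Bbb Q})$ and pick $n\in{\Bbb Z}_{>0}$ with $B:=nv\in H_2(M;{\Bbb Z})$. Then $\omega(B)=n\omega(v)>0$, so $B\in{\mathcal A}_{\omega}$ by Corollary 2.14, and $v=\frac{1}{n}B$ exhibits $v$ as a positive rational multiple of a generator of ${\mathcal S}^{\Bbb Q}_{\omega}$.

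For the real statement, let $v\in[\omega]^*$. Since $[\omega]^*$ is open in the finite-dimensional space $H_2(M;{\Bbb R})$, there is a ball around $v$ contained in $[\omega]^*$; inside that ball, the density of $H_2(M;{\Bbb Q})$ in $H_2(M;{\Bbb R})$ lets me pick $k+1$ rational classes $q_1,\dots,q_{k+1}$ in general affine position so that $v$ lies in the interior of their convex hull (here $k=\dim_{\Bbb R}H_2(M;{\Bbb R})$). Then $v=\sum_i t_iq_i$ with $t_i>0$ and $\sum_i t_i=1$, and each $q_i\in{\mathcal S}^{\Bbb Q}_{\omega}\subseteq{\mathcal S}^{\Bbb R}_{\omega}$ by the rational case, so $v\in{\mathcal S}^{\Bbb R}_{\omega}$.

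The main obstacle sits entirely in Corollary 2.14 (and thereby in Gromov's isosymplectic embedding h-principle that powers it); once Corollary 2.14 is in hand, the remaining content is the routine Carath\'eodory-type observation that every point of an open subset of ${\Bbb R}^k$ lies in the interior of a $(k+1)$-simplex on rational vertices, which is immediate from density of the lattice after scaling.
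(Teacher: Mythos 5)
Your proof is correct and follows essentially the same route as the paper: the easy inclusions from positivity of $\omega$ on generators, the rational case by scaling to an integral class and invoking Theorem 1.1 (Corollary 2.14), and the real case by openness of $[\omega]^*$ plus density of rational classes. The only cosmetic difference is that you make the last step self-contained via an explicit rational-simplex (Carath\'eodory-type) decomposition, whereas the paper cites Biran's observation that every $v\in[\omega]^*$ is a finite sum of classes on rational rays in $[\omega]^*$ --- the two decompositions are interchangeable here.
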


\begin{proof} It is clear from Theorem 1.1 that   a rational point in $[\omega]^*$ is in ${\mathcal S}^{\Bbb Q}_{\omega}$. Since any class in ${\mathcal A}_{\omega}$
has positive $\omega-$area we always have ${\mathcal S}^{\Bbb R}_{\omega}\subset [\omega]^*$. Thus we obtain the  first equality. To further show that  
${\mathcal S}^{\Bbb R}_{\omega}=[\omega]^*$, we notice that $[\omega]^*$ is an
open subset of $H_2(M;{\Bbb R})$. Hence, as observed in
~\cite{B},   every class $v\in [\omega]^*$ can be written as
$v=\sum_i v_i$ with each $v_i$  on a rational ray in $[\omega]^*$. 
Thus $v\in {\mathcal S}^{\Bbb R}_{\omega}$ as each $v_i$ is in ${\mathcal S}
^{\Bbb Q}_{\omega}$.
\end{proof}

We can generalize the duality to a family of symplectic forms as follows.
 Let now $M$ be an oriented smooth manifold and
$\Omega(M)$ be the space of orientation-compatible 
symplectic structures on $M$. There is a natural ${\Bbb R}^+$ action
on $\Omega(M)$ obtained from multiplying a symplectic form by a positive real number. 
Let ${\mathcal W}$ be any subset of $\Omega(M)$
invariant under the ${\Bbb R}^+$ action. Then the  ${\mathcal W}-$symplectic cone
is defined as 
$${\mathcal C}_{\mathcal W}=\{[\omega]\in H^2(M;{\Bbb R})|      \omega\in {\mathcal
W}\},$$
 and the intersection of ${\mathcal C}_{\mathcal W}$ with
$H^2(M;{\Bbb Q})$ is called the rational ${\mathcal W}-$symplectic cone. 
The subset of $H_2(M;{\Bbb Z})$,
${\mathcal A}_{\mathcal W}=\cap_{\omega\in {\mathcal W}} {\mathcal A}_{\omega}$
is called the ${\mathcal W}-$effective set. 
We similarly define the rational ${\mathcal W}-$surface cone
${\mathcal S}_{\mathcal W}^{\Bbb Q}$ as the convex cone in
$H_2(M;{\Bbb Q})$ generated by ${\mathcal A}_{\mathcal W}$.  
We can similarly define ${\mathcal A}^{\hbox{imm}}_{\omega}$
and immersed versions of other concepts. 

Now we can state the duality between the rational ${\mathcal W}-$surface cone and the rational ${\mathcal W}-$symplectic cone. 

\begin{corollary} Suppose $M$ is a closed, oriented manifold
of dimension $2n$ admitting symplectic structures, i.e. $\Omega(M)$ is
nonempty. If $2n\geq 6$, then for
any subset ${\mathcal W}\subset \Omega(M)$, we have 
$${\mathcal A}_{\mathcal W}=\{A\in H_2(M;{\Bbb Z})|\omega(A)>0 \hbox{ for any }\omega\in {\mathcal W}\}.$$
Consequently, the rational ${\mathcal W}-$surface cone 
is dual to 
the rational ${\mathcal W}-$symplectic cone over ${\Bbb Q}$. 
\end{corollary}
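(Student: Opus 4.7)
The plan is to reduce the statement to Theorem 1.1(1) together with the denominator-clearing trick already used for Corollary 2.15. The first, set-theoretic identity is essentially tautological once one unwinds the definitions, so there is no substantial difficulty; the ``consequence'' falls out by scaling.

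I would start by proving the equality $\mathcal{A}_{\mathcal{W}} = \{A \in H_2(M;\mathbb{Z}) : \omega(A) > 0 \text{ for all } \omega \in \mathcal{W}\}$ by verifying both containments directly. For $\subseteq$: if $A \in \mathcal{A}_\omega$ then some connected embedded surface representing $A$ carries $\omega$ as a positive area form, so $\omega(A) > 0$; intersecting over $\omega \in \mathcal{W}$ gives the inclusion. For $\supseteq$: given $A$ with $\omega(A) > 0$ for every $\omega \in \mathcal{W}$, fix any such $\omega$ and apply Theorem 1.1(1), which uses the hypothesis $2n \geq 6$, to produce an embedded $\omega$-symplectic representative; this puts $A$ in $\mathcal{A}_\omega$ for every $\omega$, hence in their intersection $\mathcal{A}_{\mathcal{W}}$. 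The representing surface is allowed to depend on $\omega$, which is exactly what the definition $\mathcal{A}_{\mathcal{W}} = \bigcap_\omega \mathcal{A}_\omega$ permits.

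The duality consequence then follows by repeating the argument of Corollary 2.15. One direction is immediate: every generator of $\mathcal{S}_{\mathcal{W}}^{\mathbb{Q}}$ lies in $\mathcal{A}_{\mathcal{W}}$, hence pairs positively with every $[\omega]$ with $\omega \in \mathcal{W}$, and this positivity is preserved by positive rational combinations, so $\mathcal{S}_{\mathcal{W}}^{\mathbb{Q}}$ is contained in the $\mathbb{Q}$-dual of the rational $\mathcal{W}$-symplectic cone. Conversely, a rational class $v \in H_2(M;\mathbb{Q})$ that pairs positively with every $[\omega]$, $\omega \in \mathcal{W}$, may be written as $v = A/n$ for some integer class $A$ and positive integer $n$; then $\omega(A) = n\,\omega(v) > 0$ for every $\omega \in \mathcal{W}$, so by the first part $A \in \mathcal{A}_{\mathcal{W}}$, and therefore $v \in \mathcal{S}_{\mathcal{W}}^{\mathbb{Q}}$.

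The only point requiring any care is the independence of representatives across $\omega \in \mathcal{W}$, but as noted this is encoded in the definition of $\mathcal{A}_{\mathcal{W}}$ as a mere intersection of the sets $\mathcal{A}_\omega$. No uniform choice of surface is needed, and no new geometric input beyond Theorem 1.1(1) is required.
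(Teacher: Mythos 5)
Your proof is correct and takes essentially the same route the paper intends: the set identity is just Theorem 1.1(1) (equivalently Corollary 2.14) applied to each $\omega\in{\mathcal W}$ separately, which the definition of ${\mathcal A}_{\mathcal W}$ as an intersection of the ${\mathcal A}_{\omega}$ permits, and the duality over ${\Bbb Q}$ follows by clearing denominators exactly as in the proof of Corollary 2.15. No further input is needed.
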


\begin{remark}
 Notice that we do not in general  have the duality over ${\Bbb R}$. This is because when the quotient of ${\mathcal C}_{\mathcal W}$ under 
${\Bbb R}^+$ is non-compact,
the real dual of ${\mathcal C}_{\mathcal W}$ may have 
boundaries containing irrational rays. 
\end{remark}

When $2n=4$, 
the conclusion in Corollary 2.16  is still true for the {\it immersed} rational
$W-$surface cone. 
However, as can be seen from the next section,
 for the embedded surface cone,  we can only expect the duality 
to hold  for  minimal 4-manifolds with $b^+=1$.
Such a duality has been verified  in ~\cite{LLiu2}  for several classes
of such manifolds    
in the case ${\mathcal W}$ is $\Omega_K(M)=\{\omega\in \Omega(M)|K_{\omega}=K\}$ for some 
 $K\in H^2(M;{\Bbb Z})$.

\section{Embedded symplectic surfaces in 4-manifolds}

Let $(M,\omega)$ be a symplectic manifold of dimension 4.
One  distinctive feature of embedded symplectic surfaces in this dimension is the adjunction formula:  
Given a $\omega-$positive class $A$, if it is represented by
a connected symplectic surface, then the genus $g$ of such a surface  is uniquely determined 
by the adjunction formula, 
$$2g-2=K_{\omega}(A)+A\cdot A.$$
We call it the $\omega-$symplectic genus of $A$ and denote it by 
$g_{\omega}(A)$. 

In fact, the generalized Thom conjecture  (\cite{KM}, ~\cite{MST},~\cite{LLiu1}, and ~\cite{OS}) asserts that 
  $g_{\omega}(A)$ is smaller than or equal to the genus 
of any smoothly embedded  connected surface representing $A$. 
We should point out that the  adjunction formula also applies to
 a surface $\Sigma$ with several components $\Sigma_1,...,\Sigma_l$, provided that we define the genus
to be $\sum_{i=1}^l g(\Sigma_i)-(l-1)$.

We know from Theorem 1.1 that every $\omega-$positive class is represented by
a connected immersed symplectic surface. We would like to know which
$\omega-$positive class is represented by an embedded symplectic surface
(not necessarily connected). We will first analyze some obstructions
and then describe several constructions. 

In this section we will sometimes
identify a cohomology class in $H^2(M;{\Bbb Z})$ with its
Poincar\'e dual in $H_2(M;{\Bbb Z})$. For instance when we say
a cohomology class $e$ is realized by a surface $\Sigma$, it means that the Poinca\'e dual to
$e$ is represented by $\Sigma$.

We remark that,  for the isotopy problem of embedded symplectic surfaces in this dimension, there are both
uniqueness and non-uniqueness results (see ~\cite{FS} and ~\cite{ST}).

\subsection{Obstructions}

There
are several obstructions to represent a $\omega-$positive class by a {\it connected} embedded symplectic surface.
We begin with the elementary constraint from the $\omega-$symplectic genus. 

\begin{lemma}
 If $g_{\omega}(A)$ is
negative, then $A$ cannot be represented by a connected embedded symplectic surface. Thus, if $A$ has negative square and $k$ is a sufficiently large
integer, then $kA$ cannot be represented by a connected embedded symplectic surface. 
\end{lemma}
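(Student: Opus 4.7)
The plan is to reduce both assertions directly to the adjunction formula displayed just before the lemma.

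For the first assertion, I would argue by contrapositive: suppose $A$ were represented by a connected embedded symplectic surface $\Sigma$ of genus $g$. Then the adjunction formula gives $2g-2 = K_\omega(A) + A\cdot A$, which by the definition of the $\omega$-symplectic genus means $g = g_\omega(A)$. Since the genus of a closed oriented surface is a nonnegative integer, this forces $g_\omega(A) \geq 0$, contradicting the hypothesis. Hence no such embedded connected symplectic representative exists.

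For the second assertion, I would compute $g_\omega(kA)$ as a function of $k$ using bilinearity of the intersection form and linearity of $K_\omega$:
\[
2 g_\omega(kA) - 2 = K_\omega(kA) + (kA)\cdot(kA) = k\, K_\omega(A) + k^2\, (A\cdot A).
\]
When $A\cdot A < 0$, the right-hand side is a quadratic polynomial in $k$ with negative leading coefficient, hence tends to $-\infty$ as $k \to \infty$. In particular, for all sufficiently large $k$ it becomes less than $-2$, i.e.\ $g_\omega(kA) < 0$. Applying the first assertion to the class $kA$ yields the conclusion.

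There is no real obstacle here; the lemma is essentially a bookkeeping consequence of the adjunction formula. The only thing worth being careful about is the convention that ``connected embedded symplectic surface'' means a smooth compact oriented surface without boundary, so that its genus is a well-defined nonnegative integer; with that convention the argument is immediate.
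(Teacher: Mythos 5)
Your proof is correct and follows essentially the same route as the paper's: the first claim is the observation that the adjunction formula forces the genus of a connected embedded symplectic representative to equal $g_\omega(A)$, which must be nonnegative, and the second claim follows because the $k^2 A\cdot A$ term dominates $k K_\omega(A)$ for large $k$ when $A\cdot A<0$. Your write-up just makes the paper's one-line argument explicit.
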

\begin{proof} The first claim is obvious since  the $\omega-$symplectic genus of a class $A$
is the genus of
any connected embedded symplectic surface representing $A$ and a connected surface
must have  non-negative genus. The second claim follows
from the domination of $K_{\omega}(kA)$ by $kA\cdot kA$  if $A\cdot A<0$ and
$k$ is large. 
\end{proof}

Many classes with positive squares may also have negative $g_{\omega}(A)$.
The following is an explicit example.

\begin{example} Let $M={\Bbb CP}^2\#2\overline{\Bbb CP}^2$
with a symplectic form $\omega$ in the class PD($\lambda H-E_1-E_2$) for some $\lambda>2$, where
$H$ is  the positive generator of $H_2({\Bbb CP}^2;{\Bbb Z})$ and
$E_1$ and  $E_2$ are the positive generators of the $H_2$ of the $\overline
{\Bbb CP}^2$. 
 Consider the class $A=3H-2E_1-2E_2$. It is $\omega-$positive and   has square 1. 
  As the symplectic canonical class $K_{\omega}$ is the Poincar\'e dual
to $-3H+E_1+E_2$, we have  $K_{\omega}(A)=-9+2+2=-5$
 and hence $g_{\omega}(A)=-1$. Thus $A$ has no connected symplectic representative. 
\end{example}

The $\omega-$symplectic genus can also be used to show that some classes with disconnected symplectic representatives do not have
connected representatives. The following is such an example. 

\begin{example} 
Let $M=S^2\times T^2$ with a product symplectic form $\omega$. The class  $A=2[S^2]$ is represented by two disjoint parallel
embedded symplectic spheres. These two symplectic spheres can be tubed to a smoothly  embedded sphere $C$ representing $A$. However, $C$ cannot be a $\omega-$symplectic sphere. 
As a matter of fact, there is no connected symplectic representative of $A$
  as $K_{\omega}([S^2])=-2$ and hence the $\omega-$symplectic genus of $A$
is equal to $-1$. 
\end{example}

The second obstruction is the following $H_1-$rank invariant $b(A)$
introduced by B. H. Li and the author.

\begin{definition}
Consider a map from a surface $\Sigma$ (possibly disconnected) 
to a smooth 4-manifold
$M$ representing the homology class $A$. The pull back of 
$H^1(M;{\Bbb R})$ is a subspace of $H^1(\Sigma;{\Bbb R})$. The rank of the skew-symmetric 
cup product restricted to this subspace is an even integer. Let $b(A)$ 
be half of that integer. 
\end{definition}

It is not difficult to show that $b(A)$ only depends on the class $A$ using
a bordism argument. In fact, W. Browder pointed out to us that 
 $b(A)$ is half  
the rank of the skew-symmetric pairing on $H^1(M;{\Bbb R})$ given by
$<a,b>_A=(a\cup b)(A)$. This is simply because, if $f:\Sigma\longrightarrow M$ is a map representing
the class $A$, then, for $a, b\in H^1(M;{\Bbb R})$, we have
$(f^*a\cup f^*b)([\Sigma])=(a\cup b)(A)$. 

With  this interpretation of
$b(A)$ we see that $b(A)\leq b_1(M)/2$. 
Notice also that if there  is a map from a connected surface $\Sigma$
to $M$ representing $A$, then $b(A)\leq g(\Sigma)$. From the latter inequality we obtain the following constraint.

\begin{lemma} 
If  $g_{\omega}(A)<b(A)$, then $A$ cannot be represented by a connected symplectic surface.
\end{lemma}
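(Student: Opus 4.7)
The plan is to derive the lemma as an immediate consequence of two facts already in place: the adjunction formula stated at the start of Section 3, which fixes the genus of a connected embedded symplectic representative, and the inequality $b(A) \leq g(\Sigma)$ recorded immediately before the lemma statement.

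First I would argue by contradiction: suppose $A$ were represented by a connected embedded symplectic surface $\Sigma \subset M$ of genus $g$. Since the adjunction formula $2g-2 = K_{\omega}(A) + A\cdot A$ is forced for any such $\Sigma$, one has $g = g_{\omega}(A)$.

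Next I would invoke the bound $b(A) \leq g(\Sigma)$ discussed just before the lemma. Its justification is Browder's observation: if $f\colon \Sigma \to M$ represents $A$ then for $a,b \in H^1(M;\mathbb{R})$ one has $\langle a,b\rangle_A = (f^{*}a \cup f^{*}b)([\Sigma])$, so the pairing $\langle\,,\,\rangle_A$ factors through $f^{*}\colon H^1(M;\mathbb{R}) \to H^1(\Sigma;\mathbb{R})$, and hence its rank is at most the rank of the cup-product pairing on $H^1(\Sigma;\mathbb{R})$, which for a connected closed oriented surface of genus $g$ is exactly $2g$. Dividing by $2$ gives $b(A) \leq g = g_{\omega}(A)$, contradicting the hypothesis $g_{\omega}(A) < b(A)$.

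There is essentially no hard step: the lemma is packaged so that all the content is absorbed into the adjunction formula and the interpretation of $b(A)$, both of which have just been recorded. The only minor point to make explicit in writing this up is that the pairing $\langle\,,\,\rangle_A$ on $H^1(M;\mathbb{R})$ has rank at most $2g(\Sigma)$ precisely because it is pulled back from a pairing on $H^1(\Sigma;\mathbb{R})$ whose rank is $2g(\Sigma)$; this is a standard fact about the intersection form on a closed oriented surface and requires no further argument.
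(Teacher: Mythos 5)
Your argument is exactly the one the paper intends: Lemma 3.5 is presented as an immediate consequence of the inequality $b(A)\leq g(\Sigma)$ recorded in the sentence preceding it, combined with the fact that the adjunction formula forces the genus of any connected embedded symplectic representative to equal $g_{\omega}(A)$. The proposal is correct and follows the same route, with the only addition being the (harmless and accurate) spelled-out justification that the rank of the pulled-back pairing is bounded by the rank $2g$ of the cup product on $H^1(\Sigma;{\Bbb R})$.
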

Here is an example where Lemma 3.5 can be applied. 

\begin{example}
Let $M=\Sigma\times \Sigma'$ be the product of two surfaces
of genus 2  with a product symplectic form $\omega'$. Choose a pair of
disjoint non-homologous circles $\gamma_1$ and $\gamma_2$ in $\Sigma$
and similar pair of circles $\gamma_1'$ and $\gamma_2'$ in $\Sigma'$. 
Then $T_1=\gamma_1\times \gamma_1'$ and $T_2=\gamma_2\times \gamma_2'$
are embedded Lagrangian tori in $M$. By an observation of Gompf in ~\cite{Go},
  $\omega'$ can be deformed to a symplectic form
$\omega$ making $T_1$ and $T_2$ $\omega-$symplectic. 
Let $T$ be the union of $T_1$ and $T_2$ and 
$A$ be the class $[T]$. Then $A\cdot A=0$ and
$$K_{\omega}(A)=K_{\omega'}(A)=(2[\Sigma]+2[\Sigma'])\cdot ([T_1]+[T_2])=0,$$
 hence
$g_{\omega}(A)=1$. On the other hand, $b(A)=2$, as $H_1(T;{\Bbb R})$ has rank 4 and injects into
$H_1(M;{\Bbb R})$. 
\end{example}

\begin{remark} We say a class $A$ is big if $A\cdot A>0$. 
We note that, for a big and $\omega-$positive class $A$ and  a large
integer $k$, the $\omega-$symplectic genus of $kA$ is dominated
by the  positive term $k^2A\cdot A$, while $b(kA)$ is always bounded
by $b_1(M)$. 
Thus the constraints from $g_{\omega}(A)$ and $b(A)$  disappear for a sufficiently large multiple
of a big and $\omega-$positive class. 
\end{remark}

From the remark above, to get a general existence result we should 
focus our attention to big and $\omega-$positive classes. It turns out there is yet another  
obstruction  for big and $\omega-$positive classes, 
which comes from the stable classes. 

 \begin{definition} A homology class $B$ is said to be 
stable if  $B$ is $J-$effective for any
$J\in {\mathcal J}_{\omega}$, i.e. it can be represented by a
$J-$holomorphic curve.
\end{definition}

In fact, it follows from the Gromov-Uhlenbeck compactness that
$B$ is stable if  $B$ is $J-$effective for a dense subset of
$J\in{\mathcal J}_{\omega}$. Moreover, it follows from the regularity theorem for 
pseudo-holomorphic curves (Proposition 7.1  in ~\cite{Ta1}), 
that a stable class of a minimal manifold must satisfy
$-K_{\omega}(B)+B\cdot B\geq 0$.

\begin{lemma} Suppose $A$ is  realized by an embedded symplectic surface,
each component with non-negative self-intersection. Then
 $\alpha=$PD($A$) is non-negative on any stable class.
\end{lemma}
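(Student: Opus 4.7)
The plan is to exploit the fact that, once we fix an almost complex structure $J \in \mathcal{J}_{\omega}$ making the embedded symplectic surface $\Sigma$ (with components $\Sigma_1,\dots,\Sigma_k$ representing $A$) a $J$-holomorphic submanifold, stability of $B$ provides a $J$-holomorphic representative of $B$, and then positivity of intersections turns the computation of $\alpha(B) = A \cdot B$ into a sum of non-negative terms.

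More precisely, I would proceed as follows. First, invoke the discussion at the start of \S2.2 to pick $J \in \mathcal{J}_{\omega}$ for which the embedded symplectic surface $\Sigma = \Sigma_1 \cup \dots \cup \Sigma_k$ is $J$-holomorphic (each $\Sigma_j$ being embedded and symplectic, this can be done component by component and glued). Since $B$ is stable in the sense of Definition 3.8, $B$ is $J$-effective, so there exists a (possibly reducible/multiply-covered) $J$-holomorphic representative of $B$. Decompose this representative using the standard factorization into simple $J$-holomorphic curves $C_1,\dots,C_\ell$ with positive integer multiplicities $m_1,\dots,m_\ell$, so that
\[
B \;=\; \sum_{i=1}^{\ell} m_i\,[C_i].
\]

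Next, expand
\[
A \cdot B \;=\; \sum_{j=1}^{k}\sum_{i=1}^{\ell} m_i\,[\Sigma_j]\cdot [C_i],
\]
and analyze each term according to whether the images coincide. If $C_i$ has image different from every $\Sigma_j$, then $C_i$ and $\Sigma_j$ are distinct simple $J$-holomorphic curves, and positivity of intersections for pseudo-holomorphic curves gives $[\Sigma_j]\cdot [C_i] \geq 0$. If instead $C_i$ has the same image as some $\Sigma_j$, then since both are simple $J$-holomorphic, $C_i$ is a reparametrization of $\Sigma_j$, so $[\Sigma_j]\cdot [C_i] = [\Sigma_j]^2 \geq 0$ by the hypothesis that each component of $\Sigma$ has non-negative self-intersection. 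In either case the contribution is non-negative, so the total $A \cdot B$ is non-negative.

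The main obstacle — and the reason the non-negative self-intersection hypothesis is needed — is precisely the possibility that a component $C_i$ of the $J$-holomorphic representative of $B$ coincides with, or is a multiple cover of, a component $\Sigma_j$ of our chosen representative of $A$; in that situation one cannot appeal to positivity of intersection between distinct pseudo-holomorphic curves, and must instead read the intersection number off from the geometric self-intersection of $\Sigma_j$. Everything else is a direct application of Gromov's positivity of intersections, the Gromov compactness/regularity package already cited after Definition 3.8, and the simple-component factorization of $J$-holomorphic stable maps.
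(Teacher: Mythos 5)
Your argument is correct and follows essentially the same route as the paper's own proof: choose $J$ making the embedded representative of $A$ $J$-holomorphic, decompose a $J$-holomorphic representative of the stable class into simple components with multiplicities, and apply positivity of intersections to distinct components while using the non-negative self-intersection hypothesis for coinciding ones. No gaps.
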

 
\begin{proof} This is based on  the intersection property of
pseudo-holomporphic curves. We can easily construct an $\omega-$compatible almost complex structure $J$ 
such that 
 any component $C_i$ of $C$ is a connected embedded  $J-$holomorphic curve. By definition,  a stable class $S\in H_2(M;{\Bbb Z})$ is represented by
 a union of irreducible $J-$holomorphic curves $D=\cup_jD_j$, where  each $D_j$ might be singular and has multiplicity $m_j>0$. In any case, 
$C\cdot D=\sum_{i,j}m_jC_i\cdot D_j$. If $C_i\ne D_j$, then $C_i\cdot D_j\geq 0$ by the positivity of intersections of
distinct irreducible pseudo-holomorphic curves in ~\cite{Mc1}. If $C_i=D_j$, then $C_i\cdot D_j=C_i\cdot C_i$, which is also non-negative by assumption. 
Thus we have shown that $\alpha(S)=A\cdot S=C\cdot D$ is non-negative. 
\end{proof}

Currently, the only way to tell whether a class is stable or not  is to evaluate the  Taubes-Witten invariants or the Gromov-Witten invariants
of this class (see ~\cite{IP}). The Taubes-Witten invariants of a class
$A$, like the more well-known
Gromov-Witten invariants, also count pseudo-holomorphic curves of fixed genus
representing $A$, however the curves are now allowed to be disconnected.  
A class is called a TW class if some  Taubes-Witten invariant of this class is non-trivial. GW classes are defined in the same way. Certainly a TW class or a GW class is a stable class.

An important property of both the TW invariants and the GW invariants
 is that they are invariant
under deformation of the symplectic forms. We use this property
to give a different argument for the obstructions coming from the TW classes
(the same argument works for the GW classes as well).

  Suppose $W$ is a TW class and $A$ is represented by an embedded $\omega-$symplectic surface
$C$ with each component having  trivial or positive normal bundle. By the inflation process
in ~\cite{LM}, for any $t\geq 0$, the class $[\omega]+t$PD($A$) is represented
by a symplectic form $\omega_t$ deformation equivalent to $\omega$. Since TW
classes only depend on the deformation equivalence class of symplectic forms, 
$W$ is a still a TW class for any $\omega_t$ and therefore we have 
$\omega_t(W)>0$. 
However if  PD($A)(W)<0$, then $[\omega]+t$PD($A$) is negative on $W$ for
$t$ large. Thus we must have PD($A) (W)\geq 0$ to begin with.

An immediate consequence of Lemma 3.9 is the following
\begin{corollary} Suppose $A$ is an $\omega-$positive class with $A\cdot A\geq 0$
and is represented by a {\it connected} embedded symplectic surface, then
PD($A$) is positive on any stable class.
\end{corollary}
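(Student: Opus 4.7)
The plan is to follow the setup of the proof of Lemma~3.9 and sharpen the non-negativity there to a strict inequality using the connectedness of the symplectic representative of $A$. First I would choose $J\in\mathcal{J}_{\omega}$ for which the connected embedded symplectic surface $C$ representing $A$ is $J$-holomorphic (standard, since every embedded symplectic surface is pseudo-holomorphic for some taming $J$). Since $W$ is stable, it is $J$-effective, so it is represented by a $J$-holomorphic cycle $D=\sum_j m_j D_j$ in which each $D_j$ is an irreducible (possibly singular) $J$-holomorphic curve and each multiplicity $m_j$ is a positive integer. I would then compute
\[
\mathrm{PD}(A)(W)=A\cdot W=C\cdot D=\sum_j m_j\,(C\cdot D_j),
\]
and invoke positivity of intersections of distinct irreducible pseudo-holomorphic curves from~\cite{Mc1}: every term with $D_j\neq C$ is non-negative, and any term with $D_j=C$ equals $m_j(A\cdot A)\geq 0$ by hypothesis. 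This already recovers the non-negativity of Lemma~3.9.

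The key step is upgrading this to strict positivity. When $A\cdot A>0$, any appearance of $C$ as some $D_j$ contributes strictly positively and one is done. Otherwise no $D_j$ coincides with $C$, and strict positivity reduces to showing that $C$ actually meets at least one $D_j$. Here I would use that $\omega(W)>0$ forces the support of $D$ to be nonempty together with the connectedness of $C$: a single connected pseudo-holomorphic curve with positive symplectic area cannot be disjoint from every irreducible component of a stable class $W$ that is not itself a positive multiple of $A$, so some $C\cdot D_j$ must be strictly positive by positivity of intersections.

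The main obstacle will be the borderline case $A\cdot A=0$, where $C$ contributes nothing when it appears among the $D_j$ and strict positivity hinges entirely on locating a $D_j\neq C$ that meets $C$. I expect the connectedness hypothesis to be essential here: for a disconnected representative, individual components could each avoid part of the support of $D$, whereas a single connected $C$ with trivial normal bundle and $\omega(C)>0$ has enough rigidity (via the adjunction formula together with the stability of $W$) to force an intersection with any stable class $W$. As a fallback I would fold in the inflation argument recalled in the paragraph preceding the corollary: the symplectic forms $[\omega]+t\,\mathrm{PD}(A)$ are all available for $t\geq 0$, and combining deformation-invariance of TW/GW classes with the strict inequality $\omega_t(W)>0$ for all $t$ should be enough to rule out the pathological configuration $\mathrm{PD}(A)(W)=0$ for a genuinely nontrivial stable $W$.
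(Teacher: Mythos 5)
Your first half---choosing $J\in\mathcal{J}_{\omega}$ so that $C$ is $J$-holomorphic, writing the stable class $W$ as a $J$-holomorphic cycle $D=\sum_j m_jD_j$, and invoking positivity of intersections---is exactly the proof of Lemma~3.9, and the paper in fact offers nothing beyond that: Corollary~3.10 is presented there as an ``immediate consequence'' of Lemma~3.9, which only yields $\mathrm{PD}(A)(W)\geq 0$. The genuine gap is in your attempted upgrade to strict positivity, and the arguments you sketch do not close it. The key claim---that a connected pseudo-holomorphic curve $C$ with $\omega(C)>0$ must meet some irreducible component of any stable class $W$ that is not a positive multiple of $A$---is false: in $S^2\times S^2$ two distinct fibers are disjoint embedded $J$-holomorphic spheres, the fiber class $F$ is $J$-effective for every tamed $J$ (McDuff), hence stable, and taking $A=W=F$ satisfies every hypothesis of the corollary while $\mathrm{PD}(A)(W)=F\cdot F=0$. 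Restricting attention to $W$ not a multiple of $A$ does not help, since the corollary quantifies over all stable classes, including $A$ itself whenever $A$ is stable. Your fallback via inflation has the same limitation: from $\omega_t(W)=\omega(W)+t\,\mathrm{PD}(A)(W)>0$ for all $t\geq 0$ one can conclude only $\mathrm{PD}(A)(W)\geq 0$; the value $0$ is consistent with that family of inequalities.

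The honest conclusion is that the strict inequality is an overstatement of what the method (and the paper's own citation of Lemma~3.9) can deliver in the borderline case $A\cdot A=0$; when $A\cdot A>0$ your observation that any appearance of $C$ among the $D_j$ contributes $m_jA\cdot A>0$ still does not suffice, because nothing forces $C$ to appear in, or to meet, the support of $D$. What Lemma~3.9 actually gives, and what the paper actually uses in the application immediately following the corollary (excluding $A\cdot E=-1$ for the class $E$ of a symplectic $-1$ sphere), is non-negativity. Your instinct that connectedness should force strictness is the right thing to probe, but as written the second half of your argument rests on a false intersection claim and should not be presented as a proof.
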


  As observed in ~\cite{Mc3}, the class  of a symplectic $-1$ sphere
is a Gromov-Witten class. 
 Therefore, if a $\omega-$positive homology class $A$ with $A\cdot A\geq 0$ has negative 
intersection with such a class, it  cannot be represented by a connected embedded symplectic surface.
Actually, if we notice that in Example 3.2, the class
$H-E_1-E_2$ is the class of symplctic $-1$ sphere
and $A\cdot (H-E_1-E_2)=-1$, we may reach the same conclusion by Corollary 3.10.

So far it is not clear to the author whether there 
are other obstructions: i.e. whether there is an $\omega-$positive class $A$
which satisfies $g_{\omega}(A)\geq b(A)$ and 
is positive on the stable classes but not representable by
a connected symplectic surface. 
 Vidussi ~\cite{V} showed that 
for  fibered 3-manifolds $N$, there are obstructions for the symplectic  4-manifold $M=S^1\times N$ coming from the Seiberg-Witten monopole classes of $N$.
But from Taubes' picture ~\cite{Ta1} linking solutions to the Seiberg-Witten
equations and the pseudo-holomorphic curves, it is possible these monopole classes of $N$ may give rise to stable classes
of $M$.


\subsection{Constructions}

Having discussed some obstructions to the existence of connected embedded symplectic
surfaces
we now describe several constructions. In view of Remark 3.7, we 
will focus on the large multiples of big and $\omega-$positive
classes whose Poincar\'e duals  are non-negative on stable classes.

The prototype of a big and  $\omega-$positive class which is  
positive on any stable  class is the Poincar\'e dual to the class of an integral symplectic form. For such classes we have the following beautiful result of Donaldson
already mentioned in \S2.

\begin{theorem} If PD($A$) is sufficiently close to the ray
generated by $[\omega]$, then a sufficiently large multiple of $A$ is represented
by a connected embedded symplectic surface. In particular, if $[\omega]$ is an integral class then sufficiently large multiples of PD($[\omega]$) are thus
represented. 
 \end{theorem}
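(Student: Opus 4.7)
The plan is to reduce this to Donaldson's theorem in the integral case. The key observation is that the set of cohomology classes admitting a symplectic representative compatible with the given orientation is open in $H^2(M;{\Bbb R})$, so there is an open cone ${\mathcal U}$ around the ray ${\Bbb R}^+[\omega]$ each of whose elements is the class of some symplectic form close to $\omega$ (one can interpolate linearly $\omega_s=(1-s)\omega+s\omega'$ for $s$ small, which remains non-degenerate). I interpret the hypothesis that PD$(A)$ is ``sufficiently close to the ray generated by $[\omega]$'' as asking that the ray ${\Bbb R}^+$PD$(A)$ lies inside ${\mathcal U}$.

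Since rational classes are dense in $H^2(M;{\Bbb R})$ and ${\mathcal U}$ is open, I would choose a positive rational number $\lambda$ such that $\lambda\cdot$PD$(A)$ lies in ${\mathcal U}$, and let $\omega'$ be a symplectic form representing this class, chosen $C^0$-close to $\omega$. Pick an integer $N$ with $N\lambda\in{\Bbb Z}$, so that $N[\omega']=N\lambda\cdot$PD$(A)$ is integral. Then apply Donaldson's asymptotically holomorphic construction~\cite{Do} to $(M,\omega')$: for all sufficiently large $k$, the Poincar\'e dual of $kN[\omega']$ is represented by a connected embedded $\omega'$-symplectic surface $\Sigma_k$. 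Because $\omega'$ is $C^0$-close to $\omega$ and $\omega'|_{\Sigma_k}$ is non-degenerate, the restriction $\omega|_{\Sigma_k}$ is also non-degenerate, so $\Sigma_k$ is $\omega$-symplectic; and since $kN[\omega']=(kN\lambda)\cdot$PD$(A)$, this $\Sigma_k$ represents the multiple $kN\lambda\cdot A$ of $A$. When $[\omega]$ is itself integral, taking PD$(A)=$PD$([\omega])$ requires no perturbation and recovers the second assertion directly from Donaldson's theorem.

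The main obstacle is ensuring that the surface produced in the Donaldson construction is connected. For large $k$ this follows from a transversality/dimension argument within the approximately holomorphic framework (the complement of the zero locus remains connected for degree large). Alternatively, if only disjoint components are produced, they can be joined in dimension $4$ by symplectically tubing across intersection points, which can be arranged by a small generic perturbation of the sections, using that two distinct $J$-holomorphic curves always meet with positive local intersection number by~\cite{Mc1}. A secondary technical point is separating the cohomological closeness of $[\omega']$ to $[\omega]$ from the pointwise $C^0$-closeness of the forms: the former is automatic from the openness argument, while the latter requires picking a specific representative via the linear interpolation above, which is harmless since symplecticity is $C^0$-open.
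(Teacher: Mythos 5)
Your proposal is correct and follows essentially the same route as the paper, which likewise proves only the integral case via Donaldson's approximately holomorphic sections of $L^{\otimes k}$ and then handles a class PD($A$) near the ray through $[\omega]$ by perturbing $\omega$ to a symplectic form $\omega'$ whose class is a rational multiple of PD($A$). One step you should tighten: the inference ``$\omega'|_{\Sigma_k}$ is non-degenerate and $\omega$ is $C^0$-close to $\omega'$, hence $\omega|_{\Sigma_k}$ is non-degenerate'' is not valid for an arbitrary $\omega'$-symplectic surface, since a tangent plane can be only barely $\omega'$-symplectic; what saves the argument is the quantitative content of Donaldson's construction (the paper's Lemma 3.12), namely that the tangent planes of $Q_s$ are uniformly close to $J$-complex lines for a $J$ compatible with $\omega'$, and such a $J$ is still tamed by $\omega$ when $\omega'$ is close to $\omega$, so the $Q_s$ are $\omega$-symplectic with a uniform margin.
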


 The starting observation for this theorem is that, in a symplectic vector space
$(V,\omega)$, a small perturbation of a symplectic subspace remains symplectic.  In particular,
a subspace is symplectic if it is close to
a complex subspace (for some $J\in {\mathcal J}_{\omega}(V)$). The folowing lemma
in ~\cite{Do} makes it precise. 

\begin{lemma} Let $f:{\Bbb C}^n\longrightarrow {\Bbb C}$ be 
${\Bbb R}-$linear with $f=a'+a''$, where $a'$ (respectively $a''$) is
${\Bbb C}-$linear (respectively ${\Bbb C}-$antilinear). If $|a''|<|a'|$,
then
$f$ is surjective and ker$(f)$ is a symplectic subspace of ${\Bbb
C}^n$. 
\end{lemma}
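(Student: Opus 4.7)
The plan is to establish the two claims in sequence: first that $f$ is surjective, and second that $\ker(f)$ is symplectic. I would work throughout with the standard K\"ahler structure $(g,J,\omega)$ on ${\Bbb C}^n$ satisfying $\omega(u,v)=g(Ju,v)$. I expect the second claim to be the main obstacle.

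For surjectivity I would argue by contradiction. If $f$ is not surjective, its image lies in a real line in ${\Bbb C}$; after multiplying $f$ by a unit complex number (an operation preserving both the ${\Bbb C}$-linearity of $a'$ and the ${\Bbb C}$-antilinearity of $a''$, together with the norms $|a'|$ and $|a''|$), I may assume $f$ is real-valued. But then $f=\bar f$, and uniqueness of the decomposition of an ${\Bbb R}$-linear functional into its ${\Bbb C}$-linear and ${\Bbb C}$-antilinear parts forces $a'=\overline{a''}$, so $|a'|=|a''|$, contradicting the hypothesis.

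Granted surjectivity, $\ker(f)$ has real codimension $2$, so it is symplectic iff $\ker(f)\cap(\ker(f))^{\perp_\omega}=0$. Using $\omega(u,v)=g(Ju,v)$ one has $(\ker(f))^{\perp_\omega}=J(\ker(f))^{\perp_g}$, and $(\ker(f))^{\perp_g}$ is the real $2$-plane spanned by the $g$-duals $\mathrm{Re}(f)^\sharp$ and $\mathrm{Im}(f)^\sharp$. By a dimension count the trivial-intersection condition is equivalent to $f$ being injective on $J(\ker(f))^{\perp_g}$, and a short $2\times 2$ determinant computation in turn shows this is equivalent to the single scalar condition $\omega(\mathrm{Re}(f)^\sharp,\mathrm{Im}(f)^\sharp)\neq 0$.

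To evaluate this scalar I would expand $\mathrm{Re}(f)^\sharp=\mathrm{Re}(a')^\sharp+\mathrm{Re}(a'')^\sharp$ (similarly for imaginary parts) and use two key identities: the ${\Bbb C}$-linearity of $a'$ gives $J\mathrm{Re}(a')^\sharp=\mathrm{Im}(a')^\sharp$, while the ${\Bbb C}$-antilinearity of $a''$ gives $J\mathrm{Re}(a'')^\sharp=-\mathrm{Im}(a'')^\sharp$. The diagonal terms of the resulting four-term expansion then contribute $+|a'|^2$ and $-|a''|^2$ respectively (this being the crucial sign difference), and the two cross terms cancel by the symmetry of $g$, so $\omega(\mathrm{Re}(f)^\sharp,\mathrm{Im}(f)^\sharp)=|a'|^2-|a''|^2>0$. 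The main obstacle is simply book-keeping this computation: the whole point of the hypothesis $|a''|<|a'|$ is precisely that the antilinearity of $a''$ flips a sign, making the two norm contributions subtract rather than add.
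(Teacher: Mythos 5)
Your argument is correct and complete. Note that the paper itself gives no proof of this lemma --- it is quoted verbatim from Donaldson \cite{Do} --- so there is nothing internal to compare against; your write-up supplies the missing linear algebra. Both halves check out: the surjectivity argument via $f=\bar f$ and uniqueness of the linear/antilinear decomposition is clean (and also covers the degenerate case $f=0$, since that would force $|a'|=|a''|=0$); and the reduction of nondegeneracy of $\omega|_{\ker f}$ to the single scalar $\omega\bigl((\mathrm{Re}\,f)^\sharp,(\mathrm{Im}\,f)^\sharp\bigr)$ is valid, with the $2\times 2$ determinant equal to $\omega(u,v)^2$ and the four-term expansion indeed yielding $|a'|^2-|a''|^2$ after the sign flip $J(\mathrm{Re}\,a'')^\sharp=-(\mathrm{Im}\,a'')^\sharp$. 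For comparison, Donaldson's own route packages the same computation differently: one shows $\mathrm{Re}(f)\wedge\mathrm{Im}(f)\wedge\omega^{n-1}$ is a positive multiple $(|a'|^2-|a''|^2)$ of the volume form $\omega^n$, which gives $\bigl(\omega|_{\ker f}\bigr)^{n-1}\ne 0$ directly and additionally shows $\ker f$ is \emph{positively} symplectic (compatibly oriented); your orthogonal-complement version proves exactly the statement of the lemma and is no less rigorous.
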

 
To describe Donaldson's construction we first assume that  $[\omega]$ is an integral class. In this case
there is a complex line bundle $L$ over $M$ with the $c_1(L)=[\omega]$, and the codimension
2 submanifolds are obtained as the zero sets $Q_s$ of suitable transverse sections $s$
of $L^{\otimes k}$ for large $k$. More precisely, fix a compatible almost complex structure $J$ on $M$ and choose a connection on the
line bundle $L$ whose curvature is
$\omega$. The connection on $L$ gives rise to  operators $\bar \partial_J$ and
$\partial_J$ on sections of $L$ and hence $L^{\otimes k}$. Given a section $s$ 
of $L^{\otimes k}$ and a point $x\in M$, $|\bar \partial_J s|$ and 
$|\partial_J s|$ 
when restricted to $x$ are respectively the holomorphic and
anti-holomorphic parts of the differential of $s$ at $x$, viewed as a map
between the complex vector spaces $T_xM$ and $T_{x,0}L^{\otimes k}$. 
Thus by Lemma 3.12 if there are sections such that 
$|\bar \partial_J s|<|\partial_J s|$ on $Q_s$ then 
$Q_s$ is symplectic. Such sections are called  approximately
holomorphic and are shown to exist by Donaldson. 
This construction clearly applies to symplectic manifolds with rational symplectic forms. As for a non-rational  symplectic form, we can always approximate it by 
rational symplectic forms. 

  The following well-known result of K\"ahler surfaces
suggests that we actually should be able to  go  far beyond the ray 
 generated by PD($[\omega]$). 

\begin{lemma} Suppose $\omega$ is a K\"ahler form
on a projective surface $M$, and 
  $A$ is  a big class Poincar\'e dual to 
 an integral cohomology class of type $(1,1)$. If we further assume
that PD($A$) is positive on all holomorphic curves, then a sufficiently
large multiple of $A$ is represented by a connected embedded symplectic surface.
\end{lemma}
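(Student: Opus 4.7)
The plan is to translate the hypotheses into algebraic geometry and invoke the Nakai--Moishezon criterion together with Bertini's theorem. By the Lefschetz $(1,1)$-theorem, the assumption that PD$(A)\in H^2(M;{\Bbb Z})$ is of type $(1,1)$ with respect to the complex structure underlying the K\"ahler form $\omega$ implies that PD$(A)=c_1(L)$ for some holomorphic line bundle $L$ on $M$. The hypothesis that $A$ is big gives $L\cdot L=A\cdot A>0$, while the hypothesis that PD$(A)$ is positive on all holomorphic curves gives $L\cdot C>0$ for every irreducible holomorphic curve $C\subset M$. These are precisely the hypotheses of the Nakai--Moishezon criterion on a smooth projective surface, so $L$ is ample.

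Next, by the Kodaira embedding theorem (equivalently, Serre's vanishing theorem), there exists $k_0$ such that $L^{\otimes k}$ is very ample for every $k\geq k_0$. Fix such a $k$ and let $\phi:M\hookrightarrow {\Bbb P}^N$ be the projective embedding defined by a basis of $H^0(M,L^{\otimes k})$, so that $\phi^*{\mathcal O}_{{\Bbb P}^N}(1)\cong L^{\otimes k}$. By Bertini's theorem applied to the base-point-free linear system $|L^{\otimes k}|$, a generic hyperplane $H\subset {\Bbb P}^N$ meets $\phi(M)$ transversally in a smooth complex curve $\Sigma=\phi^{-1}(H)\subset M$, which represents the Poincar\'e dual of $c_1(L^{\otimes k})$, i.e.\ the class $kA\in H_2(M;{\Bbb Z})$. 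Since $\phi(M)$ is a connected smooth projective surface, the Lefschetz hyperplane theorem ensures that a smooth hyperplane section is connected.

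Finally, $\Sigma$ is a smooth complex submanifold of the K\"ahler manifold $(M,\omega)$, so $\omega|_{\Sigma}$ is automatically symplectic. Hence $\Sigma$ is a connected embedded $\omega$-symplectic surface representing $kA$, as required.

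The main obstacle is really just to record this translation carefully---verifying in particular that the stated hypotheses yield full ampleness (not merely nef-and-big) via Nakai--Moishezon, and that the appropriate refinement of Bertini produces a \emph{connected} smooth divisor rather than merely a smooth one. Both steps are entirely standard once one is in the algebro-geometric setting, so the nontrivial content of the lemma lies in the dictionary itself.
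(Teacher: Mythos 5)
Your proof is correct and follows essentially the same route as the paper: Nakai--Moishezon to get ampleness of the line bundle $L$ with $c_1(L)=$PD$(A)$, Kodaira embedding to get very ampleness of a high power, and then a generic section producing a smooth irreducible (hence connected) holomorphic curve, which is automatically $\omega$-symplectic. You have merely made explicit the steps the paper leaves implicit (Lefschetz $(1,1)$ for the existence of $L$, and Bertini plus connectedness of hyperplane sections).
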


Notice that here PD($A$) is   positive 
on all stable classes since it is assumed that it is positive on all holomorphic curves.  The proof of the lemma goes as follows: PD($A$) is an ample class by the Nakai-Moishezon criterion, 
thus, by Kodaira's embedding theorem,   sufficiently large multiples of $A$ are
Poincar\'e dual to very ample classes and the corresponding holomorphic line bundles have plenty of holomorphic sections whose zero loci are 
irreducible smooth holomorphic curves. 

In view of the two preceeding results, we would like to raise the following
question. 

\begin{question} Let $A$ be a big and $\omega-$positive class whose Poincar\'e dual is non-negative on any stable class. Then, is a sufficiently high multiple
of $A$  represented by a connected embedded symplectic 
surface?
\end{question}

To shed more light on this question let us presently make a general remark 
on the constructions of smooth 2-dimensional submanifolds. 
In general there are two ways to do so.  The first method, which we call `mapping into', starts with  mapping a 
surface into  $M$, and then deforming the map 
and possibly smoothing the image to an embedding. The second method, which we call
`mapping out', 
instead starts by choosing another manifold $N$ and a codimension 2 submanifold $S\subset N$, and
then taking the 
inverse image  of a map from  $M$ into $N$ which is transversal to $S$. 
For the `mapping out' constructions, $N$ is
often taken to be a complex line bundle over $M$.

The constructions of 2-dimensional symplectic submanifolds
 also follow the same routes. In fact, the constructions in Theorem 3.11 and Lemma 3.13 are clearly of the second kind. 

Let us presently turn to the `mapping into' constructions. 
In \S2 we start with a continuous map and then use transversality and 
the h-principles to deform it to a symplectic embedding. However, this method
fails in dimension 4 and we can only obtain symplectic immersions with both positive and negative double points.

As we already mentioned, only positively immersed symplectic surfaces can be
smoothed to an embedded symplectic surface. 
One way to obtain positively immersed symplectic surface is to
start with a connected embedded symplectic surface with positive self-intersection. Since being a closed symplectic submanifold is an open condition, we can
perturbe it to several nearby symplectic surfaces intersecting each other
transversally and positively. By smoothing the double points, we obtain the
following result.

\begin{lemma} If a big class $A$ is represented by a connected embedded symplectic surface, then for any positive integer $k$, the class $kA$ is 
also represented as such.
\end{lemma}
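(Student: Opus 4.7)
The plan is to push off the given embedded symplectic surface $C$ (representing $A$) to $k$ nearby perturbations $C_1,\dots,C_k$ inside a symplectic tubular neighborhood, arrange that they meet pairwise transversally and only at positive double points, and then apply the standard symplectic resolution at each double point to obtain a connected embedded symplectic surface in class $kA$.

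First, I would fix an $\omega$-tame almost complex structure $J$ for which $C$ is $J$-holomorphic, and use the symplectic neighborhood theorem to identify a neighborhood of $C$ in $M$ with a disc bundle inside the normal bundle $\nu \to C$ equipped with its canonical symplectic form. Since the symplectic condition on an immersion is $C^1$-open, generic small sections $s_1,\dots,s_k$ of $\nu$ produce immersed symplectic surfaces $C_i$, each homologous to $A$ and obtained from $C$ by a smooth isotopy. By transversality, the sections $s_i - s_j$ can be chosen to vanish transversally, so any two $C_i,C_j$ meet transversally, and an individual $C_i$ remains embedded.

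Next I need to check that all double points are \emph{positive}. Since each $C_i$ is $C^1$-close to $C$, its tangent plane at any intersection point is $C^0$-close to a $J$-complex plane in $TM$; two such nearly $J$-complex planes intersecting transversally in a $4$-manifold automatically intersect with local intersection index $+1$. (Equivalently, one may invoke McDuff's positivity of intersections after noting that small perturbations can be taken $J'$-holomorphic for nearby $J'$.) Moreover, since $A\cdot A > 0$, any two perturbations $C_i,C_j$ with $i\ne j$ genuinely intersect in exactly $A\cdot A$ points — they cannot be pushed apart.

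Finally, I would invoke the standard symplectic smoothing of a positive transverse double point: in a Darboux chart around each intersection point, the two symplectic discs can be replaced by a symplectic annulus (Gompf's resolution), leaving the surface unchanged outside a small ball. After performing this smoothing at every double point of $C_1\cup\cdots\cup C_k$, we obtain an embedded symplectic surface representing $kA$; connectedness follows from the fact that consecutive pieces $C_i, C_{i+1}$ meet ($A\cdot A > 0$), so the resolution connects the entire configuration. The main technical point is the verification of positivity of all double points and the symplectic resolution, both of which are standard in the subject; the use of bigness ($A\cdot A>0$) is essential, as without it the perturbations could be disjoint and no intersections would exist to link the pieces together.
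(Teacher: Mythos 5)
Your strategy is the same one the paper intends (the paper devotes only a sentence to it: perturb $C$ to several nearby symplectic surfaces meeting transversally and positively, then smooth the double points), but the step where you verify positivity of the double points rests on a false claim. You assert that two planes in a symplectic $4$-manifold, each close to a $J$-complex plane and meeting transversally, must meet positively. This fails precisely in the situation at hand, where both tangent planes are close to the \emph{same} $J$-complex plane $T_pC$: in ${\Bbb C}^2$ the graph of $\epsilon\bar z$ is a symplectic surface arbitrarily $C^1$-close to ${\Bbb C}\times\{0\}$ and transverse to it at the origin, yet the local intersection index there is $-1$. The continuity-of-sign argument only controls the sign when the two limiting complex planes are themselves transverse, which they are not here. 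Correspondingly, for \emph{generic} small sections $s_i,s_j$ of $\nu$ the transverse zeros of $s_i-s_j$ have signs that merely sum to $A\cdot A$; some may be negative, and a negative transverse double point of a symplectic configuration cannot be symplectically smoothed. Your parenthetical fallback (take the push-offs $J'$-holomorphic and quote McDuff) is also not automatic: the normal bundle, viewed as a holomorphic line bundle of degree $A\cdot A$, need not admit a nonzero holomorphic section when $0<A\cdot A<g$.

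The repair is to choose the push-offs non-generically. Since $\deg\nu=A\cdot A\geq 0$, there is a smooth section $s$ of $\nu$ all of whose zeros are transverse of local degree $+1$; take $C_i$ to be the graph of $\epsilon_i s$ for small distinct $\epsilon_i$ (each is symplectic because being a symplectic surface is an open condition). Then every pairwise intersection occurs over a zero of $s$ and is positive. One then has a positive transverse $k$-fold point over each zero of $s$ rather than a collection of double points, but such a point can still be resolved symplectically, for instance branch pair by branch pair after a further small separation that preserves positivity. With that modification the remainder of your argument --- bigness $A\cdot A>0$ forcing every two pieces to meet, so that the resolved surface is connected --- is correct and matches the paper.
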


In fact, the same conclusion holds for a class with square zero
by the circle sum construction (see
~\cite{LiL1} for details).

In dimension 4,  the most effective `mapping into' approach to obtain a positively
immersed symplectic surface
is to construct a simple pseudo-holomorphic curve. Here a pseudo-holomorphic curve $u:\Sigma\longrightarrow
M$  is said to be simple
if the restriction of $u$ to any  of its components is not
multiply covered and no two components have the same image. 
The observation in ~\cite{Mc1} that any simple pseudo-holomorphic curve can be perturbed to  an immersed
pseudo-holomorphic curve $C$ of the same genus for some 
 nearby almost complex structure
readily leads to the folowing result.

\begin{lemma} If $A$ is represented by a simple
$J-$holomorphic curve $u:(\Sigma, j)\longrightarrow 
(M, J)$ for some $\omega-$tamed almost complex structure
$J$, then $A$ is represented by an embedded symplectic surface. 
\end{lemma}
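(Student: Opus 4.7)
The plan is to go from a simple pseudo-holomorphic curve to an embedded symplectic surface in two stages: first replace $u$ by a symplectically immersed curve whose only singularities are positive transverse double points, and then symplectically smooth those double points. Both ingredients are already in the literature for $J$-holomorphic curves in dimension~$4$.

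First I would appeal to the perturbation result from \cite{Mc1} recalled just before the statement: the simple $J$-holomorphic curve $u$ can be perturbed to an immersed $J'$-holomorphic curve $C$ of the same genus representing the same class $A$, where $J'$ is close to $J$. Because $\mathcal{J}_{\omega}$ is open in the space of all almost complex structures, a sufficiently small perturbation of $J$ remains $\omega$-tamed, so we may take $J'\in\mathcal{J}_{\omega}$, and then $C$ is automatically $\omega$-symplectic. After a further arbitrarily small perturbation we may assume all of its double points are transverse.

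The next observation is positivity: since $C$ is $J'$-holomorphic with $J'\in\mathcal{J}_{\omega}$, the local intersection theory of \cite{Mc1} (positivity of intersections) ensures that each transverse double point of $C$, whether a self-intersection of a component or an intersection of two distinct components, is a \emph{positive} transverse double point. I would then symplectically smooth these one at a time. Near a positive transverse double point one can choose Darboux coordinates $(z_1,z_2)$ on $\mathbb{C}^2$ in which the two local branches of $C$ are the complex coordinate planes $\{z_1=0\}$ and $\{z_2=0\}$, and replace their union inside a small ball by the smooth surface $\{z_1 z_2=\epsilon\}$ interpolated back to the original surface through a compactly supported modification. Since $\{z_1z_2=\epsilon\}$ is a complex, hence symplectic, submanifold of the standard symplectic $\mathbb{C}^2$, this produces a symplectic surface in $M$ with one fewer double point, of genus one higher, in the same homology class. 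Iterating over all double points of $C$ yields an embedded $\omega$-symplectic surface representing $A$.

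The main obstacle is the symplectic smoothing step: one must know that the standard topological plumbing-to-handle move can be carried out through symplectic surfaces. This rests entirely on the explicit local model at a positive node, together with the fact that the required cut-off can be performed inside a Darboux chart. Positivity of intersections in the previous step is precisely what guarantees that every node we have to resolve is of this standard positive type, so the same local construction works uniformly and the global surface produced remains symplectic.
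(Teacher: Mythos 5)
Your proposal is correct and follows essentially the same route the paper has in mind: perturb the simple curve to an immersed pseudo-holomorphic curve of the same genus for a nearby tame almost complex structure (McDuff), observe that positivity of intersections makes all double points positive, and then smooth each positive node via the standard local model $\{z_1z_2=\epsilon\}$ in a Darboux chart. The paper leaves these steps implicit (citing \cite{Mc1} and the earlier remark that positively immersed symplectic surfaces can be smoothed), and your write-up supplies exactly the intended details, including the necessary observation that a small perturbation of $J$ stays $\omega$-tamed.
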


In view of Lemma 3.16, we say a class $A$ is  simple if it is represented by
a simple $J-$holomorphic curve for some
$J\in {\mathcal J}_{\omega}$. 

There are abundant simple classes in a K\"ahler surface. Let us
recall some relevant facts here. 
In a K\"ahler surface $(M,\omega, J)$, holomorphic curves arise
 as the zero loci of  sections of holomorphic line bundles. If a holomorphic
line bundle is globally generated off a finite set of
points,
Bertini's Theorem will show  that the generic section is smooth
away from the base locus of the system.     Since the zero locus is pure codimension one (locally defined by one equation), this shows
then a generic divisor is a reduced curve. 
By the desingularization of curves, if $C$ is a reduced curve in 
an algebraic surface, then there exists a compact Riemann surface $\tilde C$ and a holomorphic map $\psi:\tilde C\longrightarrow C$ that is one-to-one over smooth points of $C$. 
Thus we have the following criterion of a simple class.
 
\begin{corollary} If ${\mathcal L}$ is a holomorphic line bundle
globally generated off a finite set of points, then PD($c_1({\mathcal L}))$ is
a simple class and hence can be 
represented by an embedded symplectic surface. 
\end{corollary}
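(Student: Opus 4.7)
The plan is to produce a simple $J$-holomorphic curve representing $A=\mathrm{PD}(c_1(\mathcal{L}))$ and then invoke Lemma 3.16. The building blocks are precisely the two algebro-geometric facts the author just recorded before the statement (Bertini and normalization), so the argument is essentially an assembly.

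First I would choose a generic section $s$ of $\mathcal{L}$. By the version of Bertini the author invokes, since $\mathcal{L}$ is globally generated away from a finite base locus, the zero divisor $C=\{s=0\}$ can be arranged to be smooth off that finite set, and being locally cut out by a single equation it is pure of codimension one and reduced. In particular $C$ is a reduced holomorphic curve in $M$ whose fundamental class is Poincaré dual to $c_1(\mathcal{L})$, so $[C]=A$.

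Next I would apply the desingularization (normalization) theorem quoted in the paper to get a compact, possibly disconnected, Riemann surface $\tilde{C}$ together with a holomorphic map $\psi:\tilde{C}\to C\subset M$ that is one-to-one over the smooth points of $C$. Composing with the inclusion into $M$ gives a map $u=\iota\circ\psi:\tilde{C}\to M$, which is $J$-holomorphic because $J$ is integrable and $\psi$ is holomorphic in the ordinary sense. Moreover $u_{*}[\tilde{C}]=[C]=A$.

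To check that $u$ is simple in the sense defined above Lemma 3.16, I would argue component-wise. The components of $\tilde{C}$ are in bijection with the irreducible components of $C$, with each $\tilde{C}_i$ mapping birationally (degree one) onto the corresponding irreducible component $C_i$, because the normalization map is one-to-one over the dense open set of smooth points of $C_i$. Hence no component of $u$ is multiply covered, and distinct components have distinct images (namely the distinct $C_i$). Thus $A$ is represented by a simple $J$-holomorphic curve, so by Lemma 3.16 it is represented by an embedded symplectic surface.

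I do not anticipate a real obstacle: the only point requiring a little care is the simplicity of $u$, and that is immediate from the birationality built into the normalization together with reducedness of $C$ (without reducedness, components could be covered with multiplicity, which is exactly why Bertini-plus-reducedness is used in the preceding paragraph of the paper).
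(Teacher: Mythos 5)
Your proposal is correct and follows essentially the same route as the paper: generic section plus Bertini gives a reduced divisor in the class $\mathrm{PD}(c_1(\mathcal{L}))$, normalization produces a simple $J$-holomorphic parametrization, and Lemma 3.16 finishes. The only addition is your explicit verification of simplicity via birationality of the normalization on each irreducible component, which the paper leaves implicit.
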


In particular, if a holomorphic line bundle is generated by global sections,
then it is represented by connected embedded symplectic surfaces.
Kawamata's base-point-free theorem (\cite{Ka}) provides many such line bundles which are sufficiently high
powers of certain nef line bundles. Another general source is the following theorem of Reider (\cite{Re}):
 Let $M$ be a projective K\"ahler surface and ${\mathcal L}$ be
an ample line bundle on $M$. If  $c_1({\mathcal L})^2\geq 5$ and 
$c_1({\mathcal L})(\Gamma) \geq 2$ for 
all irreducible curves $\Gamma\subset M$, then ${\mathcal K}\otimes {\mathcal L}$ is globally generated. In particular, ${\mathcal K}\otimes {\mathcal L}^3$ is globally generated for
any ample line bundle ${\mathcal L}$.

Finally, we describe the construction originated from the Seiberg-Witten 
theory. On a symplectic 4-manifold, given a cohomology class $e$, there is an
associated  Spin$^c$ structure ${\mathcal L}_e$ whose 
$c_1$ is equal to $-K_{\omega}+2e$. Witten observed that, on a K\"ahler surface, 
if  the equations are deformed by positive  multiples of K\"ahler forms, then
the solutions  
correspond exactly to holomorphic sections of a holomophic line bundle with $c_1=e$,
therefore giving rise to holomorphic curves representing $e$.
 
Taubes vastly generalizes this picture to symplectic 4-manifolds. 
He ~\cite{Ta1} starts by fixing a compatible almost complex structure on a symplectic 4-manifold
$(M,\omega)$ and then uses the induced metric to define
the  Seiberg-Witten equations. He is able to prove that, if
the Seiberg-Witten invariant is non-trivial, then the solutions to the  Seiberg-Witten equations for the Spin$^c$ structure ${\mathcal L}_e$ deformed by 
a large multiple of the symplectic form gives rise to sections of the complex line
bundle
with $c_1$ equal to $e$. Moreover the zero loci are (possibly disconnected) pseudo-holomorphic subvarieties.
He further shows that, by imposing 
$d(e)=K_{\omega}\cdot e+e\cdot e$ number of  generic point constraints,
 for a generic choice of a compatible almost complex structure, 
the pseudo-holomorphic subvarieties satisfying the constraints are
essentially embedded pseudo-holomorphic submanifolds.     
Motivated by this remarkable result, Taubes ~\cite{Ta2} defines a Gromov type invariant
counting 
embedded pseudo-holomorphic curves in a fixed class (in the connected case, an earlier
attempt was made  in ~\cite{R}), 
which we call the 
Gromov-Taubes invariant.  
The final piece in this grand picture is that the Gromov-Taubes
invariant of PD($e$) is the same as 
the Seiberg-Witten invariant for the Spin$^c$ structure ${\mathcal L}_e$
at least when $b^+>1$.

On a symplectic 4-manifold with $b^+>1$, one consequence of the Taubes-Seiberg-Witten
theory is 
that the symplectic canonical class $K_{\omega}$ is 
a Gromov-Taubes class, hence is always realized by
an embedded symplectic surface. Moreover, if $e$ is realized, so is
$K_{\omega}-e$. 
  
When $b^+$ is 1, together  with the wall crossing formula in ~\cite{LLiu3}, we are able to prove in ~\cite{LLiu2}
that most big and $\omega-$positive classes in a minimal symplectic 4-manifold
with $b^+=1$ are represented by a connected embedded
symplectic surface. More precisely, we have 

\begin{proposition} Let $(M,\omega)$ be a minimal symplectic $4-$manifold
with
$b^+=1$. Let $A$ be a big and $\omega-$positive class. If $A-$PD($K_\omega$) is also
$\omega-$positive and has non-negative square, then
$A$ is represented by a connected symplectic surface.
In particular,  for
$N$ big,  $NA$ is represented by a connected symplectic
surface.
\end{proposition}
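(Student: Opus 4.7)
The plan is to deduce the result from the Taubes--Seiberg--Witten framework together with the $b^{+}=1$ wall-crossing formula of \cite{LLiu3}, and then to upgrade a (possibly disconnected) embedded pseudo-holomorphic representative to a connected one using the positivity hypotheses and minimality. The guiding idea is that the Gromov--Taubes invariant $\mathrm{Gr}(A)$ should be nonzero under the hypotheses, at which point Lemma 3.16 (via simple $J$-holomorphic curves) produces the embedded symplectic surface.

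First, I would translate the class $A$ into the Spin$^{c}$-structure ${\mathcal L}_{A}$ with $c_{1}=-K_{\omega}+2A$, whose formal dimension $d(A)=A\cdot A-K_{\omega}(A)$ is non-negative by the hypothesis that both $A$ and $A-\text{PD}(K_{\omega})$ are $\omega$-positive and have non-negative square. Taubes's theorem $\mathrm{SW}=\mathrm{Gr}$ holds in the symplectic chamber, so $\mathrm{Gr}(A)$ in class $A$ equals the Seiberg--Witten invariant of ${\mathcal L}_{A}$ in the $\omega$-chamber. The $b^{+}=1$ wall-crossing formula then compares this with the invariant in the opposite chamber, which by the charge-conjugation symmetry $e\leftrightarrow K_{\omega}-e$ is related to the SW invariant associated with $\text{PD}(K_{\omega})-A$; one checks that the hypothesis $\omega(A-\text{PD}(K_{\omega}))>0$ and $(A-\text{PD}(K_{\omega}))^{2}\geq 0$ places the latter invariant in a chamber where Taubes's vanishing/nonvanishing analysis of the canonical class applies, and the wall-crossing term combined with minimality (no $-1$ spheres to contribute to blow-up formulas) yields $\mathrm{Gr}(A)\neq 0$.

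Once $\mathrm{Gr}(A)\neq 0$, for generic $J\in{\mathcal J}_{\omega}$ the class $A$ is represented by an embedded (a priori disconnected) $J$-holomorphic subvariety $\sum m_{i}C_{i}$. Minimality of $M$ rules out any component $C_{i}$ being a $-1$ sphere, and, together with $A\cdot A>0$ and positivity on the stable classes implied by $\omega(A)>0$, $\omega(A-\text{PD}(K_{\omega}))>0$, this forces the collection of components to define a simple pseudo-holomorphic curve in class $A$. Applying Lemma 3.16 we obtain an embedded $\omega$-symplectic surface $C$ representing $A$.

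To achieve connectedness, I would argue that each component of $C$ has non-negative self-intersection (again using minimality together with the adjunction inequality and the positivity hypotheses), and then apply the smoothing technique of Lemma 3.15: perturb the components into transverse symplectic position so that the intersections are all positive, and smooth the resulting positive double points to obtain a connected embedded symplectic surface still representing $A$. The last assertion, that $NA$ is represented for $N$ large, follows either directly from Lemma 3.15 applied to a connected representative, or from the fact that the hypotheses propagate to $NA$ while the obstructions discussed in Remark 3.7 become vacuous. The main obstacle in this plan is the precise wall-crossing bookkeeping in Step~2, where one must identify the correct chamber and match the wall-crossing term against the conjugate Spin$^{c}$ structure's invariant; everything else follows from the by-now-standard machinery already assembled in Sections 2 and 3.
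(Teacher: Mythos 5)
The paper does not actually prove this proposition: it is imported from \cite{LLiu2}, with the wall-crossing input coming from \cite{LLiu3}, so there is no internal argument to compare against. Your outline follows exactly the route the paper attributes to those references --- Taubes' $\mathrm{SW}=\mathrm{Gr}$ in the chamber determined by $\omega$, the conjugation symmetry $e\leftrightarrow K_\omega-e$, Taubes' vanishing of the invariant on classes of negative $\omega$-area (applied to $\mathrm{PD}(K_\omega)-A$, which is where the hypothesis $\omega(A-\mathrm{PD}(K_\omega))>0$ enters) to kill the invariant in the opposite chamber, and the wall-crossing formula to force $\mathrm{Gr}(A)\neq 0$. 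That half of the sketch is the right skeleton, including the implicit use of the light cone lemma (valid precisely because $b^+=1$) to get $d(A)=A\cdot(A-\mathrm{PD}(K_\omega))\geq 0$ from the two non-negative-square hypotheses; but, as you yourself concede, the actual wall-crossing bookkeeping --- in particular showing the wall-crossing term is nonzero when $b_1(M)>0$, which is the entire content of \cite{LLiu3} --- is left open, so this part is a plan rather than a proof.

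The genuine gap is in your connectedness step. Smoothing positive transverse double points only joins components that actually intersect: if two components $C_i$, $C_j$ of the Taubes subvariety are disjoint with $C_i\cdot C_j=0$ (for instance parallel square-zero tori, which Taubes' invariant explicitly permits, even with multiplicity $m_i>1$, so the subvariety need not even be simple as you claim), no perturbation creates intersection points and the representative stays disconnected. ``Each component has non-negative square'' does not force distinct components to meet. Example 3.3 of the paper ($2[S^2]$ in $S^2\times T^2$) is exactly a disconnected embedded symplectic representative with non-negative-square components admitting no connected representative; there $A$ is not big, so it does not contradict the proposition, but it shows that your connecting device is not available in general. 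To close this you would need an additional argument that every component pairs positively with the others (or a different mechanism entirely, as in \cite{LLiu2}); minimality and $A\cdot A>0$ alone do not supply it.
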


When $M$ is not minimal, the same is true if we
take into account the obstructions coming from ${\mathcal E}_{\omega}$, which is  the set of
the exceptional classes represented by symplectic $-1$ spheres.

\begin{proposition}
 Let $(M,\omega)$ be a symplectic $4-$manifold with
$b^+=1$ and symplectic canonical class $K_{\omega}$.
Let $A$ be a big and $\omega-$positive class. 
Assume that $A-$PD($K_\omega$) is $\omega-$positive and has non-negative square.  Further assume that
$A\cdot E\geq -1$ for all
$E\in {\mathcal E}_{\omega}$.
Then  $A$ can be represented by an embedded symplectic surface.
Furthermore, if $A\cdot E\geq 0$ for all $E\in {\mathcal E}_{\omega}$, then the symplectic surface is
connected.
\end{proposition}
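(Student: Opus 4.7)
The strategy is to derive Proposition 3.19 from Proposition 3.18 by peeling off, as separate components of the final surface, the symplectic $-1$ spheres representing those exceptional classes $E \in \mathcal E_\omega$ with $A \cdot E = -1$. The hypothesis $A \cdot E \geq -1$ guarantees that each such sphere is forced with multiplicity exactly one, so the remainder can be handled via the minimal case, after passing to a minimal model through further symplectic blow-downs. The connected conclusion (when $A\cdot E \geq 0$ for all $E$) corresponds to the case in which no such spheres are forced.

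Using the basic structure of $\mathcal E_\omega$ (distinct elements have non-negative pairing, and finitely many of them may be realized by pairwise disjoint symplectic $-1$ spheres; the set $S_A := \{E \in \mathcal E_\omega : A \cdot E = -1\}$ is finite because $A$ is big), I would realize $S_A$ by pairwise disjoint symplectic $-1$ spheres $C_1, \dots, C_\ell$ and set $A' := A - \sum_{i=1}^{\ell} [C_i]$. A direct computation gives $A' \cdot [C_i] = 0$ for each $i$, $A'^2 = A^2 + \ell$ is still positive, and the conditions that $A'$ and $A'-\mathrm{PD}(K_\omega)$ remain $\omega$-positive (with the square of the latter still non-negative) are preserved provided the slack in the hypotheses absorbs the change; if not, iterate the peel-off. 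After finitely many iterations one arrives at a class (still denoted $A'$) with $A' \cdot E \geq 0$ for every $E \in \mathcal E_\omega$, while retaining the remaining hypotheses.

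Next, I would blow down a maximal pairwise disjoint collection of symplectic $-1$ spheres in the remaining classes of $\mathcal E_\omega$ to reach a minimal symplectic $4$-manifold $(M_0,\omega_0)$ via $\pi : M \to M_0$. Writing $A' = \pi^* A_0 - \sum_j n_j E_j^{\mathrm{rem}}$ with $n_j = A' \cdot E_j^{\mathrm{rem}} \geq 0$, one verifies via the standard relations $\mathrm{PD}(K_\omega) = \pi^*\mathrm{PD}(K_{\omega_0}) + \sum_j E_j^{\mathrm{rem}}$ and $[\omega] = \pi^*[\omega_0] - \sum_j \epsilon_j\, \mathrm{PD}(E_j^{\mathrm{rem}})$ that $A_0$ satisfies the hypotheses of Proposition 3.18 on $(M_0,\omega_0)$ (the squares pick up non-negative corrections $\sum n_j^2$ and $\sum (n_j+1)^2$, and the $\omega_0$-pairings gain positive contributions from the $\omega(E_j^{\mathrm{rem}})$). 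Proposition 3.18 then produces a connected embedded symplectic surface $\Sigma_0 \subset M_0$ representing $A_0$. Taking an appropriate proper transform of $\Sigma_0$ under the blow-ups --- arranging, using the isotopy flexibility of small symplectic blow-ups, that the blow-up centers lie on $\Sigma_0$ with the correct local multiplicities --- yields a connected embedded symplectic surface $\widetilde\Sigma \subset M$ in class $A'$. Adjoining the disjoint $-1$ spheres $C_1,\dots,C_\ell$ gives an embedded symplectic representative of $A$; when $S_A = \emptyset$, $\widetilde\Sigma$ is itself the required connected surface.

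The principal technical hurdle is the proper-transform step: realizing each prescribed multiplicity $n_j$ in the intersection pattern of $\Sigma_0$ with the blow-up centers. When $n_j \in \{0,1\}$ this is the classical proper transform at a smooth point, but $n_j \geq 2$ requires either iterated blow-ups at infinitely near points along $\Sigma_0$ or a preliminary deformation of $\Sigma_0$ within its symplectic isotopy class to introduce a controlled singularity of the appropriate multiplicity before transforming. A related subtle point is the verification, after the peel-off, of $A' \cdot E \geq 0$ for every $E \in \mathcal E_\omega$, which invokes non-trivially the structure theory of $\mathcal E_\omega$ developed in earlier work of McDuff and Liu and is the reason the peeling may need to be iterated.
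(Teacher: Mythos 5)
Your overall strategy --- peel off the exceptional spheres with $A\cdot E=-1$, blow down to a minimal model, apply Proposition 3.18 there, and pull the surface back --- is genuinely different from the paper's. The paper does not reduce to the minimal case at all: both Propositions 3.18 and 3.19 are proved in \cite{LLiu2} by combining Taubes' SW$\Rightarrow$Gr theory with the wall-crossing formula of \cite{LLiu3}. The hypotheses that $A-$PD($K_\omega$) be $\omega$-positive with non-negative square are exactly what make the Seiberg--Witten, equivalently Gromov--Taubes, invariant of $A$ nonzero in the Taubes chamber, and the nontrivial Gromov--Taubes invariant directly produces an embedded, possibly disconnected, pseudo-holomorphic representative of $A$ itself, whose sphere components in classes of ${\mathcal E}_{\omega}$ are precisely those $E$ with $A\cdot E=-1$; no passage to a minimal model is involved.

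Two steps of your reduction have genuine gaps. First, the proper-transform step fails as described: Proposition 3.18 hands you a single smooth connected embedded surface $\Sigma_0$ in class $A_0$, necessarily of genus $g_{\omega_0}(A_0)$ by the adjunction formula, whereas a connected embedded symplectic representative of $A'=\pi^*A_0-\sum_j n_jE_j$ upstairs must have genus $g_\omega(A')=g_{\omega_0}(A_0)-\sum_j n_j(n_j-1)/2$. When some $n_j\geq 2$ this is strictly smaller, so no isotopy or local modification of $\Sigma_0$ can yield it; you would need a genuinely different, singular representative of $A_0$ with an ordinary $n_j$-fold point at each blow-up centre, and the existence of such degenerate representatives is a Severi-type problem that is no easier than the proposition itself. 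Second, the peel-off does not preserve the hypotheses: $\omega(A')=\omega(A)-\sum_i\omega(C_i)$ and $\omega(A'-\hbox{PD}(K_\omega))$ strictly decrease, and since distinct classes in ${\mathcal E}_\omega$ pair non-negatively, subtracting the $[C_i]$ can create new classes $E'$ with $A'\cdot E'=-1$; the proposed iteration is not shown to terminate, and each pass erodes the $\omega$-positivity further. Neither issue arises in the paper's route, where the wall-crossing computation is carried out for the class $A$ on $M$ directly.
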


See also ~\cite{DS} and ~\cite{S} for a  purely symplectic approach
to some of the consequences of the Taubes-Seiberg-Witten theory
building  on the existence of Lefschetz pencils by ~\cite{Do2}.

\begin{remark}
With a combination of the constructions above it is  
shown in ~\cite{LiL1} that, in fact every big and $\omega-$positive class of a symplectic 
$S^2-$bundle is represented by a connected embedded symplectic surface.
\end{remark}

\medskip
In summary the situations where Question 3.14 has an affirmative answer
are the following:

\noindent 1. $b^+=1$.

\noindent 2. $A$ is already represented by a connected embedded symplectic surface.

\noindent 3. PD($A$) is close to the ray generated by $[\omega]$. 

\noindent 4. PD($A$) is  an ample class of a projective K\"ahler surface. 

\medskip

\end{document}